\newcommand{\eps}{{\varepsilon}}
\renewcommand{\d}{\mathrm{d}}
\newcommand{\D}{\mathrm{D}}
\newcommand{\e}{\mathrm{e}}
\newcommand{\R}{\mathbb{R}}
\newcommand{\N}{\mathbb{N}}
\newcommand{\A}{\mathcal{A}}
\newcommand{\F}{\mathcal{F}}
\newcommand{\G}{\mathcal{G}}
\newcommand{\Rho}{\mathcal{R}}
\newtheorem{theorem}{Theorem}
\newtheorem{lemma}[theorem]{Lemma}
\newtheorem{corollary}[theorem]{Corollary}
\theoremstyle{remark}
\newtheorem{remark}{Remark}
\begin{document}
\title[Optimal balance via adiabatic invariance]%
{Optimal balance via adiabatic invariance of approximate slow
manifolds}
\date{\today}

\author[G.A. Gottwald]{Georg A. Gottwald}
\address[G.A. Gottwald]%
{School of Mathematics and Statistics \\
 University of Sydney \\
 NSW 2006 \\
 Australia}

\author[H. Mohamad]{Haidar Mohamad}
\address[H. Mohamad]%
{School of Engineering and Science \\
 Jacobs University \\
 28759 Bremen \\
 Germany}

\author[M. Oliver]{Marcel Oliver}
\address[M. Oliver]%
{School of Engineering and Science \\
 Jacobs University \\
 28759 Bremen \\
 Germany}


\begin{abstract}
We analyze the method of optimal balance which was introduced by
Vi{\'u}dez and Dritschel (J. Fluid Mech.\ \textbf{521}, 2004, pp.\
343--352) to provide balanced initializations for two-dimensional and
three-dimensional geophysical flows, here in the simpler context of a
finite dimensional Hamiltonian two-scale system with strong gyroscopic
forces.  It is well known that when the potential is analytic, such
systems have an approximate slow manifold that is defined up to terms
that are exponentially small with respect to the scale separation
parameter.  The method of optimal balance relies on the observation
that the approximate slow manifold remains an adiabatic invariant
under slow deformations of the nonlinear interactions.  The method is
formulated as a boundary value problem for a homotopic deformation of
the system from a linear regime, where the slow-fast splitting is
known exactly, to the full nonlinear regime.  We show that, providing
the ramp function which defines the homotopy is of Gevrey class $2$
and satisfies vanishing conditions to all orders at the temporal end
points, the solution of the optimal balance boundary value problem
yields a point on the approximate slow manifold that is exponentially
close to the approximation to the slow manifold via exponential
asymptotics, albeit with a smaller power of the small parameter in the
exponent.  In general, the order of accuracy of optimal balance is
limited by the order of vanishing derivatives of the ramp function at
the temporal end points.  We also give a numerical demonstration of
the efficacy of optimal balance, showing the dependence of accuracy on
the ramp time and the ramp function.
\end{abstract}

\maketitle

\section{Introduction}

Nonlinear Hamiltonian two-scale systems with a single fast frequency
possess an approximate slow manifold: a region in phase space
characterized by smallness of an adiabatically invariant ``fast
energy''.  A trajectory near the approximate slow manifold will stay
near it for a long period of time---often exponentially long with
respect to the scale separation parameter under suitable assumptions
(see, e.g., \cite{MacKay04,Neishtadt81}).  It is important to stress
that, despite the language used, this phase space region is not a
manifold in any rigorous sense (except in trivial cases such as linear
ODEs).  Rather, it is described by a generally diverging asymptotic
series \cite{Van13}.

An explicit description of an approximate slow manifold is usually
only practical to a low fixed order of asymptotics because the number
of terms grows exponentially with order.  Optimal truncation, a
powerful theoretical tool e.g.\ for proving almost-invariance over
exponentially long times, cannot be implemented in a computational
model.  It is, however, possible to numerically compute single points
on the approximate manifold with an accuracy that is nearly as good as
optimal truncation.  This procedure, which we refer to as
\emph{optimal balance}, is the subject of this paper.

The idea underlying optimal balance is that adiabatic invariants of
the unperturbed dynamics remain adiabatic under slowly varying
perturbations.  If a homotopy varying in slow time perturbs the system
from linear to fully nonlinear, trajectories that emerge from the
known slow subspace at the linear end will connect to an approximately
slow fully nonlinear state at the other end.  Computationally, this
amounts to solving a boundary value problem where the boundary
condition at the linear end constrains to the slow linear subspace and
the boundary condition at the fully nonlinear end constrains to the
slow base-point coordinate of the approximate manifold.

Our motivation comes from studying balance in geophysical fluid flow.
On large scales in the mid-latitudes, the Coriolis force nearly
balances the pressure gradient force while inertial forces are
subdominant.  As a result, the flow approximately splits into a slow
\emph{balanced} component which evolves nonlinearly and interacts only
weakly with the fast components which are approximately described by
linear waves.  A precise characterization of this splitting is a
perennial theme in geophysical fluid dynamics; we refer the reader to
the reviews of Vanneste \cite{Van13} and McIntyre \cite{McI15} for a
more comprehensive background.

A computational procedure for describing balance is of considerable
practical importance.  First, unphysically imbalanced initial
conditions may require unnecessary large amounts of artificial
viscocity to ensure stability in a numerial simulation; thus, accurate
balancing can improve numerical accuracy, in particular when
frontogenesis is important \cite{cullen2008comparison}.  Second, in
studies of the role of inertial-gravity waves in the energy budget of
the ocean, accurate diagnostics are currently lacking; optimal balance
may provide a way to diagnose small imbalanced components in large
unsteady flows with minimal ambiguity
\cite{VonStorchBO:2017:InteriorEP}.  Third, enforcing balance is a
practical necessity when assimilating noisy observations to initialize
a weather forecast; failing to do so results in spectacular failure
(see, e.g., the wonderful historical account in \cite{Lynch}).  To
assure that the assimilated state is consistently balanced, the
analysis output is typically post-processed, e.g.\ using a digital
filter \cite{LynchHuang92}.  Dynamical information about imbalance and
approximate slow manifolds has only recently become part of the actual
data assimilation procedure \cite{Neef06,Gottwald14}.  Cotter
\cite{Cotter13}, in particular, demonstrates that optimal balance can
be used as a constraint when assimilating balanced states in a simple
two-scale Hamiltonian model problem.

The method of optimal balance for rotating fluid flow was first
proposed by Vi{\'u}dez and Dritschel \cite{ViudezDritschel04}.  In
their work, they coin the term ``optimal potential vorticity balance''
which reflects that rather than deforming the equations of motion,
they ramp up the vorticity anomaly in the initial data.
Mathematically, this is equivalent to homotopically turning on
nonlinear interactions.  In practical terms, this is only feasible
when using a potential-vorticity-based fully Lagrangian code.  In
their work, they suggest a simple iterative scheme to solve the
resulting boundary value problem and report good behavior both in
terms of convergence of the algorithm and in terms of quality of
balance as measured by independent diagnostics.

Cotter \cite{Cotter13} studies optimal balance for data assimilation
using a simple finite-dimensional Hamiltonian system which has been
used as a prototype model for balance in a number of previous studies
\cite{Oliver06,CotterReich06,GottwaldO:2014:SlowDD}.  In particular,
Cotter points out that their earlier results \cite{CotterReich06}
imply rigorous exponential estimates for analytic ramp functions with
exponentially decaying tails.

In the present paper, we consider optimal balance in the same
finite-dimensional setting on a fixed finite interval in slow time.
In this setting, the asymptotic behavior of the method is not only
determined by the smoothness of the potential and of the ramp
function, but also by the order of vanishing of the derivatives of the
ramp function at the temporal end points.  When the derivatives of the
ramp function vanish only up to some finite order $k$ at the initial
and at the final time of the ramp, the rate of convergence of optimal
balance is limited to $O(\eps^{k+1})$, where $\eps$ denotes the
time-scale separation parameter.  Correspondingly, beyond-all-order
accuracy requires that \emph{all} derivatives of the ramp function
vanish at the end points.  However, the ramp function cannot be
simultaneously uniformly analytic and satisfy the correct end-point
conditions.  Here, we show that exponential estimates can still be
achieved when the ramp function is not analytic but of Gevrey class
$2$.

When the potential in this model is analytic, classical Hamiltonian
normal form theory states that there exists a constant $c$ and a
symplectic transformation which approximately splits the system into
fast and slow variables such that when the fast variable is initially
zero, it remains $O(\exp(-c/\eps))$ over times of $O(\exp(c/\eps))$ as
$\eps \to 0$ \cite{CotterReich06}.  Here we prove that optimal
balance, for ramp functions described above, yields a state that, if
used as initial data for the original system, corresponds to a
normal-form fast variable that remains $O(\exp(-c/\eps^{1/3}))$-small
over times of $O(\exp(c/\eps))$.  We present numerical results that
indicate that the exponent $\tfrac13$ is not sharp, but that an
exponent $1$ as in the classical normal form result cannot be
achieved.

This result provides a strong justification of the method of optimal
balance: the algorithm yields a point on the approximate slow manifold
that is exponentially close to what could be obtained from an
optimally truncated asymptotic expansion.

The method of proof has a long history.  A concise mathematical
treatment of adiabatic invariance for linear systems is given by Leung
and Meyer \cite{LeungMeyer75}, we refer the reader to this paper for
some of the early history.  Exponential estimates for nonlinear
systems are due to Nekhoroshev \cite{Nekhoroshev77} and Ne{\u\i}shtadt
\cite{Neishtadt81,Neishtadt84}.  Cotter and Reich \cite{CotterReich06}
apply this theory to the model problem under consideration here.  In
our proof, we do not use their Hamiltonian setting because the
recursive step is only easy when applying a Cauchy estimate at each
iteration.  When the ramp function is not analytic but only Gevrey
class $2$, Cauchy estimates are not available and the iteration does
not directly close up.  As we do not need estimates over times longer
than $O(1)$ in slow time, we resort to more direct estimates on an
explicit construction of the fast-slow splitting as used in
\cite{GottwaldO:2014:SlowDD}.

The paper is organized as follows. In Section~\ref{s.model}, we detail
the finite-dimensional model for balance and review the direct
construction of the slow vector field.  In Section~\ref{s.ramp}, we
describe the method of optimal balance applied to this model.  We
state and prove our main theorems on optimal balance in
Sections~\ref{s.averaging} and \ref{s.analytic} for $C^k$ potentials
and for analytic potentials, respectively.  Section~\ref{s.numerics}
presents numerical simulations corroborating our analytical results.
Section~\ref{s.summary} concludes with a discussion and open
questions.

\section{The model}
\label{s.model}

We consider the Hamiltonian system of differential equations
\begin{subequations}
  \label{e.hamiltonian}
\begin{align}
  \dot q & = p \,, \\
  \dot p & = J p - \eps\, \nabla V(q) \,,
\end{align}
\end{subequations}
where $q \colon [0,T] \to \R^{2d}$ is the vector of positions, $p$ the
vector of corresponding momenta, $J$ is the canonical symplectic
matrix in $2d$ dimensions, $V$ is a smooth potential and $\eps$ is a
small parameter.

When $d=1$, this system can be interpreted as describing the motion of
a single Lagrangian particle in the rotating shallow water equations
with frozen height field
\cite{Oliver06,CotterReich06,FrankGottwald13}.  In this
interpretation, $J \dot q$ represents the Coriolis force and
$\eps \to 0$ describes the limit of rapid rotation.  Alternatively,
\eqref{e.hamiltonian} can be seen as describing the motion of a single
charged particle in a planar potential $V$ under the influence of a
magnetic field normal to the plane of motion. In this interpretation,
$J \dot q$ represents the Lorentz force and $\eps \to 0$ corresponds
to the mass of the particle going to zero while its charge remains
constant.

The system \eqref{e.hamiltonian} is Hamiltonian, albeit with a
non-canonical symplectic structure.  To leading order, the splitting
into slow and fast degrees of freedom can be determined by inspection.
Indeed, rescaling to slow time $\tau = \eps t$, introducing a slow
momentum $\pi = p/\eps$, and setting $\eps=0$, we see that the leading
order slow dynamics is given by
\begin{equation}
  \frac{\d q}{\d \tau} = -J \nabla V(q) \,,
\end{equation}
so that the corresponding leading order fast variable is
$\omega = \pi + J \nabla V(q)$.  This splitting can be iteratively
refined by adding higher order terms, which gives an explicit formula
for the $n$th-order slow vector field $G_n (q)$ which is needed as a
reference for the optimal balance vector field to compare against and
which is stated here in terms of the original fast time variables.

\begin{theorem}[\cite{GottwaldO:2014:SlowDD}]
\label{t.nonvariational} 
For $n \in \N$, suppose $V \in C^{n+2}$ and set
\begin{equation}
  G_n (q) = \eps \sum_{i=0}^n g_i(q) \, \eps^{i} 
  \label{e.Fnvdef}
\end{equation}
with coefficient functions $g_i$ recursively defined via
\begin{subequations}
  \label{e.bks-orig}
\begin{align}
  g_0 (q) & = -J \nabla V(q) \,, \\
  g_k (q) & = -J \sum_{i+j=k-1} \D g_i(q) \, g_j(q) \,.
\end{align}
\end{subequations}
For fixed $q_0 \in \R^{2d}$ and $a>0$, let $q(t)$ denote a solution to
\begin{equation}
  \dot q =G_n(q)
  \label{e.Fnvtot}
\end{equation}
with $q(0)=q_0$.  Let $q_\eps(t)$ solve the full parent dynamics
\eqref{e.hamiltonian} consistently initialized via $q_\eps(0)=q_0$ and
$p_\eps(0) = G_n(q_0)$.  Then there exists
$\eps_0>0$ and $c=c(q_0, a, V)$ such that
\begin{equation}
  \sup_{t\in[0,a/\eps]} \lVert q_\eps(t) - q(t) \rVert 
  \leq c \, \eps^{n+2} 
  \label{e.nonvariational}
\end{equation}
for all $0 < \eps \leq \eps_0$.  
\end{theorem}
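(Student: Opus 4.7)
The plan is to quantify the distance from the ansatz slow manifold $\{p = G_n(q)\}$ by introducing the fast correction variable $\omega = p_\eps - G_n(q_\eps)$; the consistent initialization gives $\omega(0) = 0$. Differentiating and substituting the Hamiltonian equations \eqref{e.hamiltonian} yields
\begin{equation}
\dot\omega = \bigl(J - \D G_n(q_\eps)\bigr)\omega - R_n(q_\eps),
\end{equation}
where $R_n(q) = \D G_n(q)\,G_n(q) - JG_n(q) + \eps\nabla V(q)$ is the residual of the defining recursion. Using $g_0 = -J\nabla V$, $J^2 = -I$, and the recursion $g_k = -J\sum_{i+j=k-1} \D g_i\,g_j$, all powers of $\eps$ up to $\eps^{n+1}$ in $R_n$ cancel telescopically, so that $R_n(q) = O(\eps^{n+2})$ uniformly on compact sets, with implicit constant controlled by the $C^{n+2}$-norm of $V$.

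Next, I would bound $\omega$ uniformly on $[0,a/\eps]$. The unperturbed propagator $e^{Jt}$ is a rotation (since $J$ is antisymmetric), and the correction generated by $-\D G_n = O(\eps)$ contributes only a bounded multiplicative factor $\exp(C\eps t) \leq e^{Ca}$ on the interval of interest. Variation of constants gives
\begin{equation}
\omega(t) = -\int_0^t U(t,s)\,R_n(q_\eps(s))\,ds,
\end{equation}
where $U$ is the fundamental matrix of the homogeneous system. I then integrate by parts, exploiting $e^{-Js} = -J^{-1}\,\frac{\d}{\d s}e^{-Js}$ and the slow evolution of $R_n$ along $q_\eps$: because $\frac{\d}{\d s}R_n(q_\eps) = \D R_n \cdot p_\eps = O(\eps^{n+3})$, the boundary and transformed-integrand contributions are each of order $\eps^{n+2}$, and a Gronwall argument closes the estimate to give $\sup_{t \in [0,a/\eps]} |\omega(t)| \leq c\,\eps^{n+2}$.

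To deduce the estimate on $\delta = q_\eps - q$, I use $\dot\delta = G_n(q_\eps) - G_n(q) + \omega$ with $\delta(0) = 0$ and Lipschitz constant $\mathrm{Lip}(G_n) = O(\eps)$. Rather than bounding $\int_0^t|\omega|\,\d s$ (which would contribute a parasitic factor $1/\eps$ and cost one power of $\eps$), I estimate the signed cumulative integral $\Omega(t) = \int_0^t \omega(s)\,\d s$ via a second oscillatory integration by parts, exploiting the fact that the dominant part of $\omega$ has the structure of a slowly varying amplitude times a fast rotation $e^{Jt}$. This gives $\sup_{t\in[0,a/\eps]} |\Omega(t)| \leq c\,\eps^{n+2}$, and a Gronwall argument on the integrated equation for $\delta$ delivers $|\delta(t)| \leq c\,\eps^{n+2}$ on $[0,a/\eps]$.

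The principal obstacle is precisely this double oscillatory integration by parts needed to extract the optimal $\eps$-power from the fast-rotating phase $e^{Jt}$ without losing to secular growth over the long time scale $1/\eps$. The first pass turns the naive bound $|\omega| = O(\eps)$ into $|\omega| = O(\eps^{n+2})$; the second pass turns $\int_0^t|\omega|\,\d s = O(\eps^{n+1})$ into $|\Omega(t)| = O(\eps^{n+2})$. Both steps hinge on the rotational (isometric) nature of $e^{Jt}$, combined with the fact that the amplitudes entering the forcing evolve only on the slow time scale along the trajectory $q_\eps$.
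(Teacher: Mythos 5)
Your setup through the first pass is sound: writing $\omega = p_\eps - G_n(q_\eps)$, observing $\omega(0)=0$, computing $\dot\omega = (J - \D G_n)\omega - R_n$ with $R_n = \D G_n\, G_n - J G_n + \eps\nabla V$, and verifying the telescoping cancellation $R_n = O(\eps^{n+2})$ are all correct, and a single oscillatory integration by parts (or, equivalently, replacing $G_n$ by $G_{n+1}$ in the transformation, which is the route this paper takes in the proof of Theorem~\ref{mainTheorem}: it pushes the residual to $O(\eps^{n+3})$ so that a plain Gronwall bound over $[0,a/\eps]$ already gives $O(\eps^{n+2})$, with no oscillatory argument) does give $\sup_{[0,a/\eps]}\lVert\omega\rVert = O(\eps^{n+2})$.

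The gap is in your second integration by parts for $\Omega(t) = \int_0^t\omega\,\d s$. The premise ``the dominant part of $\omega$ has the structure of a slowly varying amplitude times a fast rotation $e^{Jt}$'' is false. Writing $J\omega = \dot\omega + \D G_n\,\omega + R_n$ and integrating gives
\begin{equation}
  \Omega(t) \;=\; J^{-1}\bigl(\omega(t) - \omega(0)\bigr)
  + J^{-1}\!\int_0^t \D G_n(q_\eps)\,\omega\,\d s
  + J^{-1}\!\int_0^t R_n(q_\eps)\,\d s \,.
\end{equation}
The first two terms are indeed $O(\eps^{n+2})$, but the third is not: $R_n$ carries no fast phase at all --- it depends only on $q_\eps$, which moves at speed $O(\eps)$ --- so there is nothing to integrate by parts against, and $\int_0^t R_n\,\d s = O(\eps^{n+2}\cdot a/\eps) = O(\eps^{n+1})$. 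Concretely, at leading order $R_n = \eps^{n+2} J g_{n+1}(q_\eps) + O(\eps^{n+3})$, so $J^{-1}R_n = \eps^{n+2} g_{n+1}(q_\eps) + O(\eps^{n+3})$; this is precisely the slow vector-field correction $G_{n+1} - G_n$ that the comparison trajectory $q$ (which solves $\dot q = G_n(q)$) does not see. In other words, $\omega$ has a non-oscillating slowly varying particular part of size $\eps^{n+2}$, and the fast-rotation phase that rescued the first pass is absent here. Your proposal therefore delivers only $\Omega(t) = O(\eps^{n+1})$, and the subsequent Gronwall step gives $\lVert q_\eps - q\rVert = O(\eps^{n+1})$, one power short of the claim. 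To close the gap one must account for this secular term explicitly rather than hope for oscillatory cancellation --- the mechanism you propose cannot, as stated, extract the extra power of $\eps$.

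A secondary issue: your integration-by-parts argument uses $\frac{\d}{\d s}R_n(q_\eps) = \D R_n\cdot p_\eps$, which requires one more derivative of $V$ than the construction of $G_n$ itself. Since $R_n$ already involves $\D g_n$ (hence $\D^{n+2}V$), differentiating it once more needs $V\in C^{n+3}$, exceeding the $C^{n+2}$ hypothesis. The higher-order-truncation route does not escape this either unless the regularity hypothesis is correspondingly strengthened, so be careful to track the derivative count whichever approach you take.
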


We note that this result does not fully use the Hamiltonian structure;
it only relies on the anti-symmetry of $J$.  Thus, the resulting
estimate is valid only over slow times of order one.  Hamiltonian
normal form theory will yield estimates that remain valid for much
longer times \cite{CotterReich06}.  For our purposes this is not
required, but we make use of the explicit form of the slow vector
field.

\section{Optimal balance}
\label{s.ramp}

On a conceptual level, optimal balance works by homotopically deforming
the system into a simpler, e.g.\ linear system where the slow manifold
is trivial to compute.  Figure~\ref{f.sketch} provides a sketch where
the slow manifold at $t=0$ is described by $p=0$.  The homotopy
generates a surface of approximate slow manifolds in the extended
phase space, illustrated by the green shaded surface.  In general, for
a frozen value of the homotopy parameter, the ``manifold''
$\mathcal{M}$ is only approximately invariant: trajectories drift away
exponentially slowly with respect to the scale separation parameter
$\eps$.  This is indicated by the dotted green line.

\begin{figure}[tb]
\centering
\includegraphics[width=\textwidth]{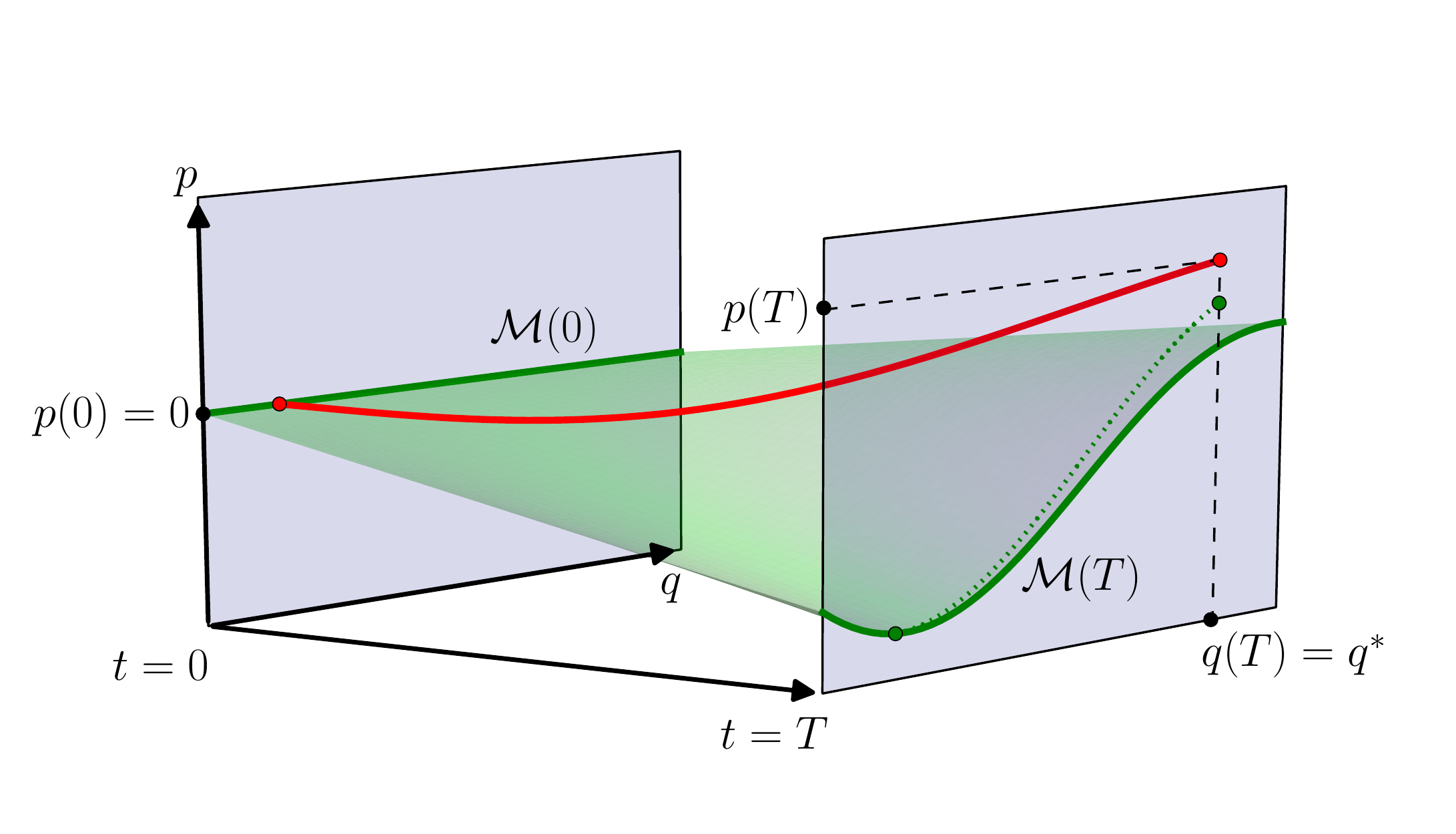}
\caption{Sketch of the geometry of optimal balance in extended phase
space.}
\label{f.sketch}
\end{figure}

In the optimal balance, we identify the homotopy parameter with slowly
varying time.  In this case, the approximate slow manifold is an
adiabatic invariant: a trajectory initially on the slow manifold will
stay near it for very long time while the manifold deforms slowly.  Such a
trajectory is shown in red in Figure~\ref{f.sketch}. In this case,
there are two sources of drift: on the one hand the drift already
present for a frozen homotopy parameter.  On the other hand, the drift
due to the deformation of the manifold in time.  In the following, we
shall estimate both sources of drift.

Our task is to specify a single point on the approximate slow manifold
$\mathcal{M}(T)$ by computing the fiber coordinate $p^*$ for a given
base-point coordinate $q^*$.  In the extended phase space picture of
Figure~\ref{f.sketch}, this corresponds to specifying two boundary
conditions: $q(T)=q^*$ and $p(0)=0$.  The first condition fixes the
base-point.  The second condition ensures that the entire trajectory
remains near $\mathcal{M}(t)$ for all $t \in [0,T]$.  We then define
$p^*=p(T)$ as the computational approximation of the fiber coordinate.

For the prototype model \eqref{e.hamiltonian}, the procedure can be
stated as follows.  Take a smooth monotonic ramp function
$\rho \colon [0,1] \to [0,1]$ with $\rho(0)=0$ and $\rho(1)=1$.  For
given $q^* \in \R^{2d}$, fix a ramp time $T>0$ and solve the boundary
value problem
\begin{subequations}
  \label{e.ramp}
\begin{align}
  \dot q & = p \,, \\
  \dot p & = J p - \eps \, \rho(t/T) \, \nabla V(q) \,,
\end{align}
with boundary conditions 
\begin{equation}
  p(0)=0 \qquad \text{and} \qquad q(T)=q^* \,.
\end{equation}
\end{subequations}
Then set $p^*=p(T)$.

We note that when the ramp parameter is frozen at $t=0$,
\eqref{e.ramp} reduces to the trivial linear fast-slow system
$\dot q = p$ and $\dot p = Jp$, where $p$ is fast and $q$ is slow.
This justifies the initial-time boundary condition $p(0)=0$.  We note
that the boundary value $q(0)$ is not used explicitly in this setup.  

In the following two sections, we analyze the accuracy of optimal
balance by comparing against the slow vector field $G_n$ associated
with the original dynamical system \eqref{e.hamiltonian}, given by
Theorem~\ref{t.nonvariational}.  We shall see that the asymptotic
construction of the slow vector field for the ramped system
\eqref{e.ramp} contains additional terms at $O(\eps^{k+1})$ unless all
derivatives of $\rho$ up to order $k$ vanish at the final time.
Similarly, the description of the trivial slow manifold $p=0$ differs
from the description of the slow manifold for the ramped system
\eqref{e.ramp} at $O(\eps^{k+1})$ unless all derivatives of $\rho$ up
to order $k$ vanish at the intial time.  Thus, the order of accuracy
of optimal balance is limited by the order of vanishing of derivatives
of the ramp function at the end points.

\section{Algebraic optimal balance}
\label{s.averaging}

In this section, we consider the case when $V$ or the ramp function
$\rho$ are only finitely differentiable.  Then the best we can expect
is an algebraic rate of convergence of optimal balance.  The proof is
a straightforward generalization of the classical fast-slow splitting
used to prove Theorem~\ref{t.nonvariational} in
\cite{GottwaldO:2014:SlowDD}.

\begin{theorem} \label{mainTheorem}
For $n \in \N$, suppose $\rho \in C^{n+1}([0,1])$ with $\rho(0)=0$ and
$\rho(1)=1$ satisfying the algebraic order condition
\begin{gather}
  \rho^{(i)}(0) = \rho^{(i)}(1) = 0
\end{gather}
for $i = 1, \dots, n$.  Suppose further that $V \in C^{n+2}$.  Fix
$a>0$ and consider a sequence of ramp times $T=a/\eps$ and a sequence
of solutions $(q,p)$, implicitly parameterized by $\eps$, to the
boundary value problem \eqref{e.ramp}.  Then there exists a constant
$c=c(\rho, a, n, V)$ such that
\begin{gather}
  \lVert p(T) -  G_n(q^\ast) \rVert 
  \le c \, \varepsilon^{n+2} \,.
  \label{e.main}
\end{gather}
\end{theorem}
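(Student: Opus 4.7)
The plan is to extend the direct non-variational slow-manifold construction behind Theorem~\ref{t.nonvariational} to the non-autonomous ramped system, and then leverage the endpoint vanishing of $\rho^{(i)}$ to make the two boundary conditions compatible with a single approximate slow vector field. Introducing the slow time $\tau = t/T = \eps t/a$, I would look for a time-dependent approximate slow vector field
\begin{equation*}
  G_n^\rho(q, \tau) = \eps \sum_{i=0}^n g_i^\rho(q, \tau) \, \eps^i
\end{equation*}
so that the graph $p = G_n^\rho(q, \tau)$ is invariant for \eqref{e.ramp} up to an $O(\eps^{n+2})$ residual. Plugging the ansatz into $\dot p = Jp - \eps\, \rho(\tau)\,\nabla V(q)$, using $\dot q = p$ and $\dot\tau = \eps/a$, and matching powers of $\eps$ yields the recursion
\begin{align*}
  g_0^\rho(q, \tau) & = -J\, \rho(\tau)\,\nabla V(q) \,, \\
  g_k^\rho(q, \tau) & = -J \sum_{i+j=k-1} \D g_i^\rho(q, \tau)\, g_j^\rho(q, \tau) - \frac{J}{a}\, \partial_\tau g_{k-1}^\rho(q, \tau) \,,
\end{align*}
where $g_k^\rho$ is a polynomial in $\rho, \rho', \dots, \rho^{(k)}$ with coefficients built from derivatives of $V$ of order at most $k+1$; the hypotheses $\rho \in C^{n+1}$ and $V \in C^{n+2}$ ensure that $G_n^\rho$ and its first derivatives are continuous on $\R^{2d} \times [0,1]$.

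The key endpoint observation is as follows. At $\tau = 0$ every $\rho^{(i)}(0)$ with $i = 0, \dots, n$ vanishes, so a short induction in the recursion gives $g_k^\rho(\cdot, 0) = 0$ for $k = 0, \dots, n$ and hence $G_n^\rho(\cdot, 0) \equiv 0$. At $\tau = 1$ the vanishing of $\rho^{(i)}(1)$ for $i = 1, \dots, n$ annihilates every temporal-derivative term in the recursion, reducing it to the autonomous recursion \eqref{e.bks-orig}; therefore $G_n^\rho(q, 1) = G_n(q)$. This is exactly what is needed to reconcile $p(0) = 0$ with the target value $G_n(q^*)$ through a single non-autonomous approximate slow manifold.

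Next I would introduce the fast correction $\omega(t) = p(t) - G_n^\rho(q(t), t/T)$. Differentiating along trajectories of \eqref{e.ramp} and using the residual form of the recursion, one obtains
\begin{equation*}
  \dot\omega = \bigl(J - \D_q G_n^\rho(q, \tau)\bigr)\omega + \eps^{n+2} R_n(q, \tau) \,,
\end{equation*}
with $R_n$ continuous and bounded along the trajectory. The initial boundary condition $p(0) = 0$ together with $G_n^\rho(\cdot, 0) \equiv 0$ yields $\omega(0) = 0$. A variation-of-constants estimate, carried out as in the autonomous case in \cite{GottwaldO:2014:SlowDD}, then gives $\lVert\omega(t)\rVert \leq c\,\eps^{n+2}$ uniformly on $[0, T] = [0, a/\eps]$; evaluating at $t = T$ and invoking $G_n^\rho(q^*, 1) = G_n(q^*)$ produces the claimed bound \eqref{e.main}.

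The most delicate point will be the last estimate: a naive Gronwall application over a window of length $a/\eps$ would degrade the $\eps^{n+2}$ forcing to only $\eps^{n+1}$, so the extra power of $\eps$ has to be recovered from the oscillatory structure of the $\e^{Jt}$ flow, exactly as in the autonomous averaging argument. Adapting that argument here requires verifying that the explicit $\tau$-dependence, which now feeds through $\partial_\tau G_n^\rho$ and the full tower of derivatives of $\rho$, contributes only uniformly bounded, non-oscillatory terms rather than opening a second fast scale---which is precisely the role played by the $C^{n+1}$ regularity of the ramp function and by the vanishing conditions at the endpoints.
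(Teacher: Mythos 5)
Your construction of the ramped slow vector field, the recursion with the extra $-J\,\partial_\tau$ term, and the observation that the endpoint vanishing conditions force $G_n^\rho(\cdot,0)\equiv 0$ and $G_n^\rho(\cdot,1)=G_n$ all match the paper's $F_n$ construction. The gap is in the final estimate, and you correctly flag it yourself: with the expansion truncated at order $n$, the residual in the $\omega$-equation is only $O(\eps^{n+2})$, so a Gronwall or variation-of-constants bound over the interval $[0,\,a/\eps]$ yields only $O(\eps^{n+1})$. You propose to recover the missing power of $\eps$ by exploiting the oscillatory structure of the $\e^{Jt}$ propagator, but you do not carry this out, and doing so (e.g.\ integrating by parts against $\e^{J(t-s)}$) would bring in a $\tau$-derivative of the residual, hence one more derivative of $\rho$ and of $V$ than the hypotheses $\rho\in C^{n+1}$, $V\in C^{n+2}$ grant. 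So the argument as sketched does not close.

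The paper avoids the averaging step entirely by going one order further in the expansion: it sets $w = p - \eps\,F_{n+1}$ with $F_{n+1} = \sum_{i=0}^{n+1} f_i\,\eps^i$, so that the residual after the order-by-order cancellation is $O(\eps^{n+3})$, not $O(\eps^{n+2})$. A naive Gronwall over the time window of length $a/\eps$ then degrades this by exactly one power, landing at $O(\eps^{n+2})$ as required. Correspondingly, one gives up the exact identity $\omega(0)=0$ in favour of $w(0) = -\eps^{n+2} f_{n+1}(q(0),0) = O(\eps^{n+2})$, since $f_{n+1}(\cdot,0)$ involves $\rho^{(n+1)}(0)$ which the order condition does not force to vanish; but $O(\eps^{n+2})$ is all that is needed. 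In short: instead of extracting the lost power of $\eps$ from cancellation in the integral, push the expansion one order higher so that there is no power to lose. This is the fix you should make to your proposal.
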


\begin{proof}
By choosing appropriate units of time, we can take $a=1$ without loss
of generality.  We then introduce the fast variable
$w = p - \eps \, F_{n+1}$, where
\begin{equation}
  F_n (q,t) = \sum_{i=0}^n f_i(q,t) \, \eps^i 
  \label{e.Fnvdef}
\end{equation}
with coefficients $f_i$ to be determined.  Then
\begin{subequations}
\begin{align}
  \dot q 
  & = \eps \, F_{n+1} + w \,, 
      \label{e.transformed.a} \\
  \dot w
  & = (J - \eps \, \D F_{n+1}) \, w
      + \eps \, (J F_{n+1} - \rho \, \nabla V)
      - \eps^2 \, \partial_\tau F_{n+1} 
      - \eps^2 \, \D F_{n+1} \, F_{n+1} 
  \label{e.transformed.b} 
\end{align}
\end{subequations}
where, as before, $\tau = \eps t$ so that
$\partial_t = \eps \partial_\tau$.  We now eliminate the inhomogeneous
term on the right of \eqref{e.transformed.b} order by order up to an
$O(\eps^{n+2})$ remainder.  This leads to the recursive expression
\begin{subequations}
\begin{align}
  f_0 & = -\rho \, J \nabla V(q) \,, \\
  f_k & = - J \, \partial_\tau f_{k-1} 
          - J \sum_{i+j=k-1} \D f_i(q) \, f_j(q) 
\end{align}
  \label{e.bks}
\end{subequations}
for $k = 1, \dots, n+1$.  We remark that for $\rho \equiv 1$, $F_n$
reduces to $G_n$ introduced in \eqref{e.Fnvdef}.
Thus,
\begin{equation}
  \label{wRemainder}
  \dot w = (J - \eps \, \D F_{n+1}) \, w + O(\eps^{n+2}) 
\end{equation}
so that left-multiplying with $w$ implies
\begin{equation}
  \label{wRemainder1}
  \frac{\d}{\d t} \lVert w \rVert 
  \leq \eps \, \lVert \D F_{n+1} \rVert \, \lVert w \rVert 
       + O(\eps^{n+2}) \,. 
\end{equation} 
By assumption, $\partial_\tau^i \rho(0) = \rho^{(i)}(0) = 0$ so that
$f_i(q,0)=0$ for $i=1, \dots, n$.  Recalling that $p(0)=0$, we obtain
\begin{equation}
  w(0) = p(0) - \eps \, F_n(q(0),0) - \eps^{n+2} \, f_{n+1}(q(0),0)
       = O(\eps^{n+2}) \,.
\end{equation}
Hence, applying the Gronwall lemma to \eqref{wRemainder1}, we find
that there exists $c = c(\rho,n,V)$ such that
\begin{equation}
  \sup_{t \leq T} \, \lVert p(t) - \eps \, F_n(q(t),t) \rVert
  \leq c \, \eps^{n+2} \,.
\end{equation}
Comparing with \eqref{e.bks-orig} and noting that $\rho^{(i)}(1)=0$
for $ i= 1, \dots, n$, we see that
\begin{gather}
  G_n (q^\ast) = \eps \, F_n(q(T),T) \,.
\end{gather}
The required estimate \eqref{e.main} follows.
\end{proof}

\begin{corollary}
Suppose that, in the setting of Theorem~\ref{mainTheorem}, $V$ is
analytic and asymptotically strictly convex.  Then for every desired
order $n\in \N$ there exists a ramp function so that the method of
optimal balance generates a state which remains balanced to
$O(\eps^{n+2})$ over times of $O(\exp(c/\eps))$ under the dynamics of
system \eqref{e.hamiltonian}.
\end{corollary}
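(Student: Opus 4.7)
The plan is to combine Theorem~\ref{mainTheorem} with the exponentially long-time adiabatic invariance result of Cotter and Reich~\cite{CotterReich06}. For the given $n$, I would first produce a ramp function meeting the hypotheses of Theorem~\ref{mainTheorem}: a polynomial of degree $2n+1$ on $[0,1]$ with prescribed endpoint values $0$ and $1$ and vanishing first $n$ derivatives at both endpoints suffices, existence being a routine Hermite interpolation. Theorem~\ref{mainTheorem} then gives directly
\begin{equation*}
  \lVert p^\ast - G_n(q^\ast) \rVert \le c \, \eps^{n+2}.
\end{equation*}

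Second, I would invoke the Hamiltonian normal form for \eqref{e.hamiltonian} from \cite{CotterReich06}. Under analyticity and asymptotic strict convexity of $V$, there is a near-identity symplectic transformation after which the system splits into slow and fast parts and the fast normal form coordinate $\xi$ satisfies an exponentially good adiabatic invariance estimate of the form
\begin{equation*}
  \lVert \xi(t) \rVert \le \lVert \xi(0) \rVert + C \, \exp(-c/\eps)
\end{equation*}
for $|t| \le \exp(c/\eps)$. By construction of the slow vector field, the state $(q^\ast, G_n(q^\ast))$ lies $O(\eps^{n+2})$ from the approximate slow manifold $\{\xi = 0\}$; since the normal form transformation is near-identity and smooth, the initial data $(q^\ast, p^\ast)$ produced by optimal balance also has $\lVert \xi(0) \rVert = O(\eps^{n+2})$. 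Feeding this into the above bound, the fast coordinate, and hence the distance to the approximate slow manifold, remains $O(\eps^{n+2})$ for times of order $\exp(c/\eps)$, which is the required balance statement.

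The one place where care is needed is the long-time propagation. In the original variables a direct Gr\"onwall argument would inflate an initial $O(\eps^{n+2})$ mismatch exponentially in $t$ and destroy the conclusion well before $t \sim \exp(c/\eps)$. What rescues the estimate is that in normal form coordinates the rate of change of the fast action is itself exponentially small in $\eps$, so that the initial error enters the bound additively rather than multiplicatively. This is precisely the content of the Nekhoroshev-type estimate of \cite{CotterReich06}, and is exactly why the corollary imposes analyticity (needed for Cauchy estimates at every step of the iterative normal form construction) together with asymptotic strict convexity (which provides the coercivity required to bootstrap those estimates into a long-time statement). With these two ingredients in place, the corollary reduces to assembling the two bounds above.
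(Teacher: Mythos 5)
Your proposal follows the same two-step route as the paper: apply Theorem~\ref{mainTheorem} to get $\lVert p^\ast - G_n(q^\ast)\rVert = O(\eps^{n+2})$, then invoke the exponentially long-time normal form estimate from \cite{CotterReich06} to propagate this smallness. The explicit Hermite-interpolation construction of the ramp function and the remark about why a naive Gr\"onwall argument would fail are both good additions that the paper leaves implicit.

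The one place where your argument is looser than the paper's is the link between the two notions of ``fast variable.'' You write that $(q^\ast, G_n(q^\ast))$ lies $O(\eps^{n+2})$ from the set $\{\xi=0\}$ and then appeal to the transformation being ``near-identity and smooth'' to transfer this to the optimal balance output. But near-identity only means $O(\eps)$-close to the identity, which by itself would only guarantee that $\xi(0) = O(\eps)$ for any state with $p = O(\eps)$; it does not by itself deliver $\xi(0) = O(\eps^{n+2})$. What actually closes the gap---and what the paper explicitly invokes---is that the asymptotic expansion defining the slow manifold is unique order by order, so the explicit construction $p - G_n$ of Theorem~\ref{t.nonvariational} and the Hamiltonian normal form fast variable $p_\eps$ of \cite{CotterReich06} coincide up to $O(\eps^{n+2})$ terms. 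It is this coincidence, not merely the near-identity property of the transformation, that lets you conclude $\lVert \xi(0) \rVert = O(\eps^{n+2})$ from $\lVert p^\ast - G_n(q^\ast)\rVert = O(\eps^{n+2})$. If you replace the ``near-identity'' justification with this uniqueness argument, your proof becomes identical in substance to the paper's.
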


This result is a consequence of the uniqueness of the asymptotic
expansion.  More specifically, the fast variable
$p-G_n$ in our construction and the fast
variable $p_\eps$ in the Hamiltonian normal form setting of
\cite{CotterReich06} coincide up to terms of $O(\eps^{n+2})$.  Thus,
optimal balance at $O(\eps^{n+2})$ in the sense of
Theorem~\ref{mainTheorem} is equivalent to $p_\eps = O(\eps^{n+2})$ in
the notation of \cite{CotterReich06}.  Since $V$ is assumed analytic
and asymptotic strict convexity of $V$ implies that trajectories
remain in a compact subset of phase space for all times,
\cite[Theorem~2.1]{CotterReich06} applies and yields persistent
$O(\eps^{n+2})$ smallness of the fast variable over exponentially long
times.

\section{Exponential optimal balance}
\label{s.analytic}

In this section, we refine the result of Section~\ref{s.ramp} for the
case when $V$ is analytic and $\rho$ is in Gevrey class $2$.  

Let us first recall that a function $f \in C^\infty(U)$ for
$U \subset \R$ open is of Gevrey class $s$ if there exist constants
$C$ and $\beta$ such that
\begin{equation}
  \sup_{x \in U} \lvert f^{(n)}(x) \rvert
  \leq C \, \frac{n!^s}{\beta^n}
  \label{e.gevrey}
\end{equation}
for all $n \in \N$; see, e.g., \cite{ito1993encyclopedic}.  We write
$f \in G^s(U)$.  Then the following is true.

\begin{theorem}\label{mainTheorem2}
Suppose $\rho \in G^2 (0,1)$ with $\rho(0)=0$ and $\rho(1)=1$
satisfying the exponential order condition
\begin{equation}
  \label{CRho}
  \rho^{(i)}(0) = \rho^{(i)}(1) = 0 
\end{equation}
for all $i \in \N^*$.  Fix $a>0$ and consider a sequence of ramp times
$T=a/\eps$ and a sequence of solutions $(q,p)$, implicitly
parameterized by $\eps \leq 1$, to the boundary value problem
\eqref{e.ramp}.  Now suppose there exists a compact subset of phase
space $\mathcal{K}\subset \R^{2d}$ containing this sequence of
solution trajectories and that there exist $R>0$ and $z_0 \in \R^{2d}$
with $\mathcal{K}\subset B_{R/2}(z_0)$ such that $V$ is analytic on
$B_R(z_0)$.  Then there exist $n = n(\rho, a, V, \eps) \in \N$ and
positive constants $c = c(\rho, a, V)$ and $d = d(\rho, a, V)$ such
that
\begin{gather}\label{MainEstimExp}
  \lVert p(T) -  G_n(q^\ast) \rVert 
  \leq d \, \e^{-c \eps^{-\frac{1}{3}}} \,.
\end{gather}
\end{theorem}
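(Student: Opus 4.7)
The plan is to revisit the proof of Theorem~\ref{mainTheorem}, carefully tracking the dependence of its constants on the truncation order $n$, and then to choose $n = n(\eps)$ optimally to exploit both the analyticity of $V$ and the Gevrey-$2$ regularity of $\rho$. The algebraic argument already provides an estimate $\lVert p(T) - G_n(q^*) \rVert \leq c_n \, \eps^{n+2}$, whose constant $c_n$ is governed by uniform bounds on $f_{n+1}$ and $\D f_{n+1}$ together with a Gronwall factor $\exp(\lVert \D F_{n+1} \rVert)$. The exponential improvement stems from proving that $c_n \leq C \, \Lambda^{n+1} \, ((n+1)!)^3$ for some $\Lambda = \Lambda(\rho, V, R)$, after which minimization over $n$ produces the advertised rate $\exp(-c \, \eps^{-1/3})$.

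First I would install a Cauchy-estimate machinery tailored to the eventual optimal truncation order. Using analyticity of $V$ on $B_R(z_0)$, extend $V$ to a complex neighborhood and construct a nested family of complex domains $\Omega_0 \supset \cdots \supset \Omega_{n+1}$, each containing $\mathcal{K}$, with $\mathrm{dist}(\Omega_k, \partial \Omega_{k-1}) \geq R/(8(n+1))$. The Cauchy integral formula then gives $\lVert \nabla^j V \rVert_{\Omega_k} \leq C_V \, j! \, (8(n+1)/R)^j$, while the Gevrey hypothesis yields $|\rho^{(i)}(\tau)| \leq C_\rho \, (i!)^2 / \beta^i$. With this machinery in hand, the central step is the inductive bound
\begin{equation}
  \sup_{\tau \in [0,1]} \lVert f_k(\cdot, \tau) \rVert_{\Omega_k}
  \leq \Lambda^{k+1} \, (k!)^3 \qquad \text{for } 0 \leq k \leq n+1 \,.
\end{equation}
Three factorial sources accumulate in the recursion $f_k = -J \, \partial_\tau f_{k-1} - J \sum_{i+j=k-1} \D f_i \, f_j$: the Gevrey-$2$ bound on time derivatives of $\rho$ contributes $(k!)^2$ via the $\partial_\tau$ branch; the Cauchy estimates on each $\D f_i$ across the nested domains contribute only a geometric factor per differentiation; and the combinatorial number of terms produced by the iterated convolution sum contributes an additional $k!$. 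Multiplied together and absorbed into a geometric prefactor $\Lambda^{k+1}$, these give the stated $(k!)^3$ growth.

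Substituting $c_n \leq C \, \Lambda^{n+2} \, ((n+1)!)^3$ into the algebraic estimate yields
\begin{equation}
  \lVert p(T) - G_n(q^*) \rVert
  \leq C \, (\eps \, \Lambda)^{n+1} \, ((n+1)!)^3 \,,
\end{equation}
which by Stirling's approximation is majorized by $C' \exp\bigl( (n+1) \log(\eps \Lambda (n+1)^3 / \e^3) \bigr)$. Minimizing over $n$ selects $n+1 \sim (\eps \Lambda)^{-1/3}$ and produces the envelope $\exp(-3 (\eps \Lambda)^{-1/3})$, matching \eqref{MainEstimExp}; one also verifies that the Gronwall factor $\exp(\lVert \D F_n \rVert)$ stays bounded because the partial sums $\sum_{i=0}^n \eps^i \, \lVert \D f_i \rVert$ remain geometric up to the optimal truncation. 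The main obstacle will be the inductive estimate itself: the nested domain radii must be chosen uniformly in $k$ (hence proportional to $1/(n+1)$), so that each Cauchy estimate costs only a bounded factor per differentiation; otherwise the bound degrades to $(k!)^4$ and yields only the weaker exponent $\eps^{-1/4}$. This requires fixing the optimal $n$ in advance as a function of $\eps$---which the statement permits since $n$ is allowed to depend on $\eps$---and running the entire recursion inside a single preordained nest of domains.
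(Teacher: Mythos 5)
Your overall strategy---track the $n$-dependence of the constants in the algebraic proof, use a nest of domains shrinking by $O(1/n)$ so each Cauchy estimate costs a factor $O(n)$, establish a $(k!)^3$-type bound on $f_k$, and then optimize over the truncation order---is the same as the paper's, and the Stirling minimization at the end correctly produces $n+1\sim(\eps\Lambda)^{-1/3}$ and the rate $\exp(-c\,\eps^{-1/3})$.

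However, the inductive step as stated has a genuine gap. Your hypothesis is a bound on $\sup_\tau \lVert f_k(\cdot,\tau)\rVert_{\Omega_k}$ alone, but the recursion
\begin{equation}
  f_k = -J\,\partial_\tau f_{k-1} - J\!\!\sum_{i+j=k-1}\!\!\D f_i\, f_j
\end{equation}
calls $\partial_\tau f_{k-1}$, and a sup-norm bound on $f_{k-1}$ gives you \emph{no} control on $\partial_\tau f_{k-1}$: the $\tau$-derivative is not a Cauchy estimate you can take on your nested spatial domains, since $\tau$ is a real variable and $\rho$ is only Gevrey, not holomorphic. To close the induction you would need, at every order $k$, simultaneous bounds on $\partial_\tau^m f_k$ for \emph{all} $m$, tracking how much Gevrey budget remains. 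This is exactly what the paper's inner-product factorization $f_k=\langle\Rho_k,\F_k\rangle$ achieves: $\Rho_k$ carries all $\rho$-dependence and all $\partial_\tau$ hits, estimated by Gevrey/Leibniz (Lemma~\ref{EstimRhoDeriv} via Lemma~\ref{l.combinatorial2}), while $\F_k$ carries all $V$-dependence and all $\D$ hits, estimated by Cauchy on the nest (Lemmas~\ref{l.estimAnalitic}, \ref{l.Fk_estimate}). That decomposition is not a notational convenience; it is what makes the recursion close, and your sketch omits it.

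A second, more minor point: your accounting of the three factorial sources is off, even though the total $(k!)^3$ comes out right. You claim the Cauchy estimates across the nested domains are ``geometric per differentiation'' and that the convolution combinatorics supplies a $k!$. In fact it is the reverse. With domain gaps $\sim R/n$, each Cauchy differentiation costs a factor $\sim n$, so $k$ applications give $n^k$, which at the optimal $k\approx n$ is $\sim k!$ by Stirling; this is the content of Lemma~\ref{l.Fk_estimate}, where $\lVert\F_k\rVert_{R/2}\le C(n/\gamma)^k$. The combinatorial count of terms, by contrast, is only geometric: it is controlled by the Catalan-like sequence $S_k\sim(3-2\sqrt2)^{-k}$ in the same lemma, not by a factorial. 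The two misattributions cancel in the final exponent, but they would matter if you tried to push the argument to Gevrey classes other than $2$ or to sharpen the exponent $1/3$, so it is worth getting the bookkeeping straight.
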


To prove this theorem, we proceed as in the proof of
Theorem~\ref{mainTheorem}, albeit with a more careful estimate on the
remainder term.  Defining $w$ as before, we write equation
\eqref{wRemainder} in the form
\begin{equation}
  \dot{w} = (J -\eps \, \D F_{n+1}) \, w - R_{n+1} \,, 
\end{equation}
with explicit remainder 
\begin{gather}
  \label{e.remainder}
  R_{n+1} 
  = \eps^{n+3} \, \partial_\tau f_{n+1} +
    \sum_{k=n+1}^{2(n+1)} \eps^{k+2} 
    \sum_{\substack{i+j=k \\ i,j \leq n+1}} \D f_i \, f_j \,.
\end{gather}

The key observation is that each of the terms appearing in the
expression for $f_k$, and each of the terms appearing in the
expression for the remainders $R_k$, is a product of functions which
only depend on $\rho$ with functions which only depend on $V$.  Hence,
they can be written as inner products of coefficient vectors encoding
all $\rho$-dependence with coefficient vectors encoding all
$V$-dependence.  A H\"older-like inequality will separate the two, so
that we can estimate each class of coefficients separately in their
respective norms.

To formalize this idea, we need to introduce some notation.  We define
the Cartesian product $\F \oplus \G$ of two vectors
$\F = (\F^1, \dots, \F^N)$ and $\G = (\G^1, \dots, \G^M)$ as
\begin{equation}
  \F \oplus \G = (\F^1, \dots, \F^N, \G^1, \dots, \G^M)
\end{equation}
and the tensor product $\mathcal{A} \otimes \G$ of a vector of linear
operators $\mathcal{A} = (\A^1, \dots, \A^N)$ acting on a vector
$\G = (\G^1, \dots, \G^M)$ as
\begin{equation}
  \mathcal{A} \otimes \G
  = (\mathcal{A}^1 \G^1, \dots, \mathcal{A}^1 \G^M, \dots,
     \mathcal{A}^N \G^1, \dots, \mathcal{A}^N \G^M) \,.
\end{equation}
Further, we define the vector family $\{\Rho_k\}$ as
\begin{subequations}
\begin{gather}
  \Rho_0 = \rho \,, \\
  \Rho_{k+1} = \partial_\tau \Rho_k \oplus
  \bigoplus_{\substack{i+j=k }}  \Rho_i \otimes \Rho_j 
  \quad \text{for} \quad
  k = 0, \dots, n \,, \\  
  \Rho_{k+1} = \bigoplus_{\substack{i+j=k \\ i,j \leq n+1}} 
  \Rho_i \otimes \Rho_j
  \quad \text{for} \quad k = n+1, \dots, 2n+2
\end{gather}
\end{subequations}
where the components of $\Rho_j$ are acting on the components of
$\Rho_l$ by multiplication and the indexed Cartesian product can be
performed in any order so long as the order convention remains fixed
throughout, and the family $\{\F_k\}$ as
\begin{subequations}
  \label{e.Fk}
\begin{gather}
  \F_0 = -J\nabla V \,, \\
  \F_{k+1} 
  = - \biggl( 
        \F_k \oplus \bigoplus_{j+l=k}\D\F_j\otimes \F_l 
      \biggr) \, \mathbb{J}_{k+1}
  \quad \text{for} \quad
  k = 0, \dots, n+1 \,, \\
  \F_{k+1}
  = - \biggl(
        \bigoplus_{\substack{i+j=k \\ i,j \leq n+1}}
        \D\F_j \otimes \F_l \biggr) \, \mathbb{J}_{k+1}
  \quad \text{for} \quad
  k = n+2, \dots, 2n+2 \,,
\end{gather}
\end{subequations}
where $\mathbb{J}_{k+1}$ denotes the block-diagonal matrix of matching
dimension with $J$ on the main diagonal.

As the recursive structure of the coefficient vectors mirrors the
recursive structure in the definition of the $f_k$ by \eqref{e.bks},
we can write
\begin{equation}
  f_k = \sum_{i=1}^N \Rho_k^i \, \F_k^i 
  \equiv  \langle \Rho_k, \F_k \rangle \,.
\end{equation}
Likewise, the remainder \eqref{e.remainder} takes the form
\begin{equation}
  R_{n+1} 
  = J \sum_{k=n+1}^{2(n+1)} \eps^{k+2} \,
    \langle \Rho_{k+1}, \F_{k+1} \rangle \,.
  \label{e.remainder1}
\end{equation}

We first consider the family of coefficient vectors $\Rho_k$.  For a
general $\Rho \equiv (\Rho^1, \dots, \Rho^N)$, we define the norm
\begin{equation}
  \lvert \Rho \rvert
  = \max_{i=1,\dots,N} \, \lvert \Rho^i \rvert \,.
  \label{e.rhonorm}
\end{equation}
We then have the following estimate with respect to this norm.

\begin{lemma} \label{EstimRhoDeriv}
Let $\rho \in G^2(0,1)$ with $C=1$ and $\beta \leq 1$ in
\eqref{e.gevrey}.  Then
\begin{equation}
  \label{e.CorEstimGmak}
  \lvert \Rho_k \rvert \leq \frac{(k+1)!^2}{\beta^{k+1}} \,.
\end{equation}
\end{lemma}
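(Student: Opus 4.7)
The plan is to prove the estimate by strong induction on $k$, controlling each component of $\Rho_k$ through a Gevrey-weighted surrogate functional. A preliminary structural observation, proved by induction from the recursion defining $\Rho_k$, is that every component of $\Rho_k$ is a finite sum
\[
  \sum_\mu c_\mu \, \rho^{(n_1^\mu)} \cdots \rho^{(n_{m^\mu}^\mu)}
\]
with non-negative coefficients $c_\mu \geq 0$, and all participating monomials satisfy the weight identity $\sum_l (n_l^\mu + 1) = k+1$. Starting from $\Rho_0 = \rho$, this is preserved because $\partial_\tau$ raises exactly one order $n_l$ by one, while the tensor product of monomials from $\Rho_i$ and $\Rho_j$ with $i+j=k$ concatenates their factor lists and adds their weights; the secondary recursion for $k \geq n+1$ uses only the tensor product step, so it preserves the invariant as well.

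To exploit the Gevrey bound $|\rho^{(n)}| \leq n!^2/\beta^n$, I introduce, for any such sum-of-monomials $P$, the weighted functional
\[
  \widehat{P} = \sum_\mu c_\mu \prod_l \frac{n_l^\mu!^2}{\beta^{n_l^\mu}}
\]
and set $\widehat{\Rho}_k = \max_i \widehat{\Rho_k^{(i)}}$. Term-by-term Gevrey estimates give $|P| \leq \widehat{P}$, so it suffices to prove $\widehat{\Rho}_k \leq (k+1)!^2/\beta^{k+1}$. The functional $\widehat{\cdot}$ has two convenient properties: it is multiplicative on products, so the tensor contribution satisfies $\widehat{\Rho_i \otimes \Rho_j} \leq \widehat{\Rho}_i \, \widehat{\Rho}_j$; and on a single monomial of weight $k+1$ the product rule gives
\[
  \widehat{\partial_\tau (\text{monomial})}
  = \frac{1}{\beta} \Bigl( \sum_l (n_l+1)^2 \Bigr) \widehat{(\text{monomial})}
  \leq \frac{(k+1)^2}{\beta} \, \widehat{(\text{monomial})}\,,
\]
using the weight identity together with $\sum_l x_l^2 \leq (\sum_l x_l)^2$ for $x_l \geq 0$. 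Linearity extends the derivative estimate from individual monomials to whole components.

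With these tools the induction closes almost mechanically. The base case $\widehat{\Rho}_0 = 1 \leq 1/\beta$ follows from $\beta \leq 1$. For the inductive step, the tensor contribution to $\widehat{\Rho}_{k+1}$ is bounded by $(i+1)!^2(j+1)!^2/\beta^{k+2}$, which fits inside $(k+2)!^2/\beta^{k+2}$ via the combinatorial identity $(i+1)!(j+1)! \leq (k+2)!$ for $i+j=k$. The derivative contribution is bounded by $(k+1)^2 (k+1)!^2/\beta^{k+2}$, which also fits inside $(k+2)!^2/\beta^{k+2}$ because $(k+2)! = (k+2)(k+1)!$. The substantive step to get right is the derivative estimate: everything hinges on using the weight invariant to reduce a generic product-rule expansion to a factor of at most $(k+1)^2$, which is precisely what is needed for the Gevrey factor $n!^2$ to match up with the target bound $(k+2)!^2$.
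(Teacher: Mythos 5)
Your proof is correct, and it takes a genuinely different route from the one in the paper. The paper works with classes $\Gamma_k$ of \emph{unexpanded} expressions of the form $\partial_\tau^m \prod_j r_j$ with $k = m + \lvert\alpha\rvert$; the estimate then proceeds by applying the multi-index Leibniz rule once at the top level and invoking the combinatorial inequality of Lemma~\ref{l.combinatorial2} (which in turn rests on Lemma~\ref{l.1}). You instead fully expand each component of $\Rho_k$ into a nonnegative combination of monomials $\rho^{(n_1)}\cdots\rho^{(n_m)}$, observe the per-monomial weight invariant $\sum_l(n_l+1)=k+1$, and introduce the majorizing functional $\widehat{\,\cdot\,}$, which is multiplicative on monomials and satisfies the clean derivative bound $\widehat{\partial_\tau M}=\beta^{-1}\bigl(\sum_l(n_l+1)^2\bigr)\widehat{M}\le\beta^{-1}(k+1)^2\,\widehat{M}$ via $\sum_l x_l^2\le(\sum_l x_l)^2$. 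This collapses the combinatorics entirely: your tensor step needs only $(i+1)!\,(j+1)!\le(k+2)!$ for $i+j=k$, and neither appendix lemma is required. The cost is having to track the expanded monomial representation (including multiplicities from the product rule), which the $\Gamma_k$ device sidesteps by keeping $\partial_\tau^m$ in front. A further payoff of your route is worth noting: the derivative bound generalizes verbatim to $\sum_l(n_l+1)^s\le\bigl(\sum_l(n_l+1)\bigr)^s=(k+1)^s$ for any $s\ge1$, and the factorial inequality is $s$-independent, so your argument yields $\lvert\Rho_k\rvert\le(k+1)!^s/\beta^{k+1}$ for every Gevrey class $G^s$ with $s\ge1$ --- whereas the paper's Leibniz-plus-Lemma~\ref{l.1} route, as the Remark after Theorem~\ref{mainTheorem2} observes, only closes for $s\ge2$. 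In other words, your proof would strengthen the range of that Remark.
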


\begin{proof}
We recursively define a family of function classes via
$\Gamma_1 = \{ \rho \}$ and $r \in \Gamma_k$ for $k \geq 2$
if there exists a nonnegative integer $m \in \N$, a
multi-index of length $s \in \N^*$ of strictly positive integers
$\alpha \in (\N^*)^s$, and functions $r_j \in \Gamma_{\alpha_j}$
for $j=1, \dots, s$ such that $k = m + \lvert \alpha \rvert$ and
\begin{equation}
  r = \partial_\tau^m \prod_{j=1}^s r_j \,.
  \label{e.decomposition}
\end{equation}
We note that the components of $\Rho_{k-1}$ are of class
$\Gamma_k$.  We shall show that $r \in \Gamma_k$ satisfies
\begin{equation}
  \sup_{\theta \in (0,1)} \lvert r(\theta) \rvert 
  \leq \frac{k!^2}{\beta^k} \,.
  \label{e.r-estimate}
\end{equation}
Due to the definition of the norm \eqref{e.rhonorm}, this implies
\eqref{e.CorEstimGmak}.

To prove \eqref{e.r-estimate}, we proceed by induction on $k$.  For
$k=1$, the statement is obvious.  Now suppose $k \geq 2$, so that $r$
has a decomposition of the form \eqref{e.decomposition}.  We can also
assume, without loss of generality, that when $s=1$,
$\lvert \alpha \rvert = \alpha_1 = 1$ and $m=k-1$.  In this case, the
statement is a direct consequence of the Gevrey class property
\eqref{e.gevrey}.  Now suppose that $s \geq 2$.  Then, by the Leibniz
rule,
\begin{equation}
  \lvert r(\theta) \rvert
  = \Biggl| 
      \sum_{\lvert \beta \rvert = m} \binom{m}{\beta}
      \prod_{j=1}^s \partial_\tau^{\beta_j} r_j (\theta)
    \Biggr|
  \leq \sum_{\lvert \beta \rvert = m} \binom{m}{\beta}
         \frac{(\alpha+\beta)!^2}{\beta^{\lvert \alpha + \beta
         \rvert}}
  \leq \frac{m!}{\beta^k} \frac{k!^2}{m!} \,,
\end{equation}
where the first inequality uses the induction hypothesis and the
second inequality is based on the observation that
$\lvert \alpha + \beta \rvert = m+ \lvert \alpha \rvert = k$ and a
combinatorial inequality which is stated and proved as
Lemma~\ref{l.combinatorial2} in the Appendix.
\end{proof}

We now turn to the family $\F_k$.  We define the corresponding
norms as follows.  For $z_0 \in \R^{2d}$ fixed and arbitrary $r>0$,
let $B_r(z_0)$ denote the closed ball of radius $r$ centered at $z_0$.
For a vector field $f$ on $\R^{2d}$, we write
\begin{equation}
  \lVert f \rVert_r
  = \sup_{z \in B_r(z_0)} \lVert f(z) \rVert
\end{equation}
and define a norm for $\F \equiv (\F^1, \dots, \F^N)$ via
\begin{equation}
  \lVert \F \rVert_r
  = \sum_{i = 1}^N \, \lVert \F^i \rVert_r \,.
  \label{e.f-norm}
\end{equation}
We now prove a variant of Cauchy's estimate in this setting.

\begin{lemma} \label{l.estimAnalitic}
Let $r>s>0$ and suppose the components of $\F$ and $\G$ are analytic
on $B_r(z_0)$.  Then
\begin{equation}
  \label{estimAnalitic}
  \lVert \D\F \otimes \G \rVert_s 
  \leq \frac{1}{r-s} \, \lVert \F \rVert_r \, \lVert \G \rVert_s \,.
\end{equation}
\end{lemma}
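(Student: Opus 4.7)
The plan is to reduce the claim to the classical Cauchy estimate applied componentwise, exploiting the fact that the norm on vector families is additive in components and that $\D\F \otimes \G$ is simply the list of all pairwise products $\D\F^i \, \G^j$. First I would unfold the definitions of $\otimes$ and of the norm in \eqref{e.f-norm} to write
\begin{equation*}
\lVert \D\F \otimes \G \rVert_s = \sum_{i=1}^N \sum_{j=1}^M \lVert \D\F^i \, \G^j \rVert_s,
\end{equation*}
so the double sum will factor into the desired product once each summand is bounded by $(r-s)^{-1} \lVert \F^i \rVert_r \, \lVert \G^j \rVert_s$. For each pair $(i,j)$ and each $z \in B_s(z_0)$, submultiplicativity of the operator norm gives $\lVert \D\F^i(z)\,\G^j(z) \rVert \le \lVert \D\F^i(z) \rVert_{\mathrm{op}} \, \lVert \G^j(z) \rVert$, reducing the problem to a derivative bound on $\D\F^i$ alone.

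The key step is the Cauchy estimate for analytic vector fields: since $\F^i$ is analytic on $B_r(z_0)$ and any $z \in B_s(z_0)$ satisfies $B_{r-s}(z) \subset B_r(z_0)$, I would apply the one-variable Cauchy estimate along complex lines through $z$ in the direction realising a given directional derivative, obtaining
\begin{equation*}
\lVert \D\F^i(z) \rVert_{\mathrm{op}} \le \frac{1}{r-s} \, \lVert \F^i \rVert_r.
\end{equation*}
Taking the supremum in $z$ and summing over $i,j$ using the factorisation $\sum_i \sum_j \lVert \F^i \rVert_r \, \lVert \G^j \rVert_s = \lVert \F \rVert_r \, \lVert \G \rVert_s$ then yields \eqref{estimAnalitic}. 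The only delicate point is securing exactly the constant $1/(r-s)$ in the Cauchy step rather than a dimension-dependent multiple; this is the main, albeit mild, obstacle, and is handled by choosing the contour of integration to lie in the complex line spanned by the unit vector that realises the operator norm of $\D\F^i(z)$, so that no geometric loss is incurred in passing from a polydisc to the Euclidean ball $B_r(z_0)$.
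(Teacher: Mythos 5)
Your proof is correct and rests on the same idea as the paper's: apply the one-variable Cauchy estimate along a suitable complex line through $z$ to bound the relevant directional derivative, then sum over components using the additivity of the norm \eqref{e.f-norm}. The only organizational difference is that the paper composes $f$ directly with the ray $t \mapsto z + t\, g(z)$, so that $\phi'(0) = \D f(z)\, g(z)$ is obtained in one step, whereas you first bound the operator norm $\lVert \D \F^i(z) \rVert_{\mathrm{op}}$ by picking the maximizing unit direction and then invoke submultiplicativity $\lVert \D\F^i(z)\,\G^j(z)\rVert \le \lVert \D\F^i(z)\rVert_{\mathrm{op}}\lVert \G^j(z)\rVert$; both routes give the sharp constant $1/(r-s)$.
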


\begin{proof}
Let $h$ be any component of $\D\F \otimes \G$, i.e., there are
components $f,g$ of $\F$ and $\G$, respectively, such that
$h = \D f \, g$.  For fixed $z \in B_s(z_0)$, the function
$\phi(t) = f(z+t \, g(z))$ is analytic for
$\lvert t \rvert \leq \delta \equiv (r-s)/ \lVert g \rVert_s$.  Since
$\phi'(0) = h$, the classical Cauchy estimate implies
\begin{equation}
  \lVert h(z) \rVert
  = \lVert \phi'(0) \rVert
  \leq \frac1\delta \, 
       \sup_{\vert t \rvert \leq \delta} \lVert \phi(t) \rVert
  \leq \frac1{r-s} \, \lVert f \rVert_r \, \lVert g \rVert_s \,.
\end{equation}
Estimate \eqref{estimAnalitic} then follows from the definition of the
norm \eqref{e.f-norm}.
\end{proof}

\begin{lemma} \label{l.Fk_estimate} 
Let $z_0 \in \R^{2d}$, $R>0$, and $V$ be analytic on $B_{R(z_0)}$.
Then there exist constants $C>0$ and $\gamma>0$ such that for any
$n \in \N^\ast$ and $k \in \{ 0, \dots, 2n+3 \}$,
\begin{equation}
  \lVert \F_k \rVert_{R/2}
  \leq C \, \biggl( \frac{n}\gamma \biggr)^k \,.
  \label{e.Fk_estimate}
\end{equation} 
\end{lemma}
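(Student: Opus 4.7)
The plan is an induction on $k$ that uses a family of nested analyticity radii together with a componentwise application of Lemma~\ref{l.estimAnalitic}. First I would fix nested radii $r_k = R - k\delta$ for $k = 0, \ldots, 2n+3$ with $\delta = R/(2(2n+3))$, so that $r_0 = R$, $r_{2n+3} = R/2$, and each gap between consecutive radii equals $\delta$. Analyticity of $V$ on $B_R(z_0)$ makes $\F_0 = -J\nabla V$ analytic there, and the recursion \eqref{e.Fk} preserves analyticity, so each $\F_k$ is analytic on $B_{r_k}(z_0)$. Observe that $1/\delta$ is of order $n$; this is precisely the loss per level that the target bound $C(n/\gamma)^k$ allows.

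The heart of the argument is to control, rather than the full sum-of-components norm, the maximum component norm $a_k = \max_i \lVert \F_k^i \rVert_{r_k}$. Inspection of the proof of Lemma~\ref{l.estimAnalitic} shows that its Cauchy-type bound is sharp componentwise: for scalar fields $f$ and $g$ analytic on $B_r(z_0)$ and $B_s(z_0)$ with $r > s$, one has $\lVert \D f \cdot g \rVert_s \leq (r-s)^{-1} \lVert f \rVert_r \lVert g \rVert_s$. Applied with $r = r_j$ and $s = r_{k+1}$ to each component $\D\F_j^\beta \cdot \F_l^\gamma$ of $\F_{k+1}$, this yields at most $\delta^{-1} a_j a_l$ for that component, while components inherited directly from $\F_k$ in the first regime of \eqref{e.Fk} contribute at most $a_k$. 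The ansatz $a_k \leq C_0 B^k$ then closes provided $B \geq C_0/\delta$, so one may take $B = c_1 n$ with $c_1$ independent of $n$, giving $a_k \leq C_0 (c_1 n)^k$.

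Finally, let $N_k$ denote the number of components of $\F_k$. From \eqref{e.Fk}, $N_0 = 1$ and $N_{k+1} \leq N_k + \sum_{j+l=k} N_j N_l$ in either regime. The associated generating function $N(x) = \sum_{k \geq 0} N_k x^k$ satisfies $xN(x)^2 + (x-1)N(x) + 1 = 0$, which has positive radius of convergence $3 - 2\sqrt{2}$, so by Cauchy--Hadamard there exist absolute constants $C_2, c_2 > 0$ with $N_k \leq C_2 c_2^k$. Combining,
\[
  \lVert \F_k \rVert_{R/2} \leq \lVert \F_k \rVert_{r_k} \leq N_k \, a_k \leq C_0 C_2 \, (c_1 c_2 n)^k = C \, (n/\gamma)^k,
\]
with $\gamma = 1/(c_1 c_2)$ and $C = C_0 C_2$, establishing the claim.

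The main obstacle is the tight bookkeeping: a naive sum-of-norms induction picks up both a factor $1/\delta = O(n)$ from the Cauchy estimate and an additional factor $k+1 = O(n)$ from the sum $\sum_{j+l=k}$, giving only the useless bound $C(n^2/\gamma)^k$. Separating the $n$-dependence, which enters only through $1/\delta$ once per level per component, from the purely combinatorial counting of components, which grows at a rate $c^k$ independent of $n$, is what recovers the single factor of $n$ per level required by the lemma.
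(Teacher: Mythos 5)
Your proof is correct and is essentially the same argument as in the paper: both proceed by induction on a ladder of nested radii with step $\delta = R/(4n+6) = O(R/n)$, apply the Cauchy-type bound of Lemma~\ref{l.estimAnalitic} at each level, and absorb the convolutional growth into the recursion $S_0=1$, $S_{k+1} = S_k + \sum_{i+j=k} S_i S_j$, whose solution is bounded by a geometric sequence with ratio $(3-2\sqrt{2})^{-1}$. Your component count $N_k$ satisfies exactly this recursion and is therefore the paper's $S_k$; your factorization $\lVert \F_k \rVert_{R/2} \leq N_k \, a_k$ is a repackaging of the paper's single ansatz $\lVert \F_k \rVert_{R-\delta k} \leq (M^{k+1}/\delta^k)\, S_k$, where $M$ bounds $\lVert \nabla V \rVert$.

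One small correction to your closing remarks. The ``obstacle'' you describe for a sum-of-norms induction — that one supposedly picks up an extra factor $k+1 = O(n)$ from $\sum_{j+l=k}$ and hence lands on the useless bound $C(n^2/\gamma)^k$ — does not actually arise. That obstacle would appear only if one insisted on a purely geometric ansatz $\lVert \F_k \rVert \leq A B^k$ for the sum-norm. The paper instead uses the non-geometric ansatz $\lVert \F_k \rVert_{R-\delta k} \leq (M^{k+1}/\delta^k)\, S_k$, and with the sum-norm the inequality from Lemma~\ref{l.estimAnalitic} reads
\begin{equation*}
  \lVert \F_{k+1} \rVert_{R-\delta(k+1)}
  \leq \lVert \F_k \rVert_{R-\delta(k+1)}
       + \frac{1}{\delta} \sum_{i+j=k}
         \lVert \F_i \rVert_{R-\delta k}\,
         \lVert \F_j \rVert_{R-\delta(k+1)} \,,
\end{equation*}
so the $k+1$ terms are \emph{summed} into the quadratic recursion defining $S_{k+1}$, not multiplied by $k+1$. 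Your max-component-norm-times-count split is a perfectly clean way to organize the same estimate, but it is not forced by any failure of the sum-norm approach.
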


\begin{proof}
We set 
\begin{gather}
  \delta = \frac{R}{4n+6}
  \quad \text{and} \quad
  M = \max \biggl\{ 
             \sup_{z \in B_R(z_0)} \lvert \nabla V(z) \rvert,
             \delta
           \biggr\} \,,
           \label{eq:M}
\end{gather}
and recursively define the sequence $(S_k)$ via
\begin{equation}
  S_0 = 1 \,, 
  \qquad 
  S_{k+1} = S_k +\sum_{i+j = k} S_i \, S_j
\end{equation}
which has the asymptotic behavior
\cite[pp.~474--475]{flajolet2009analytic}
\begin{equation}
  \label{asymBehavior}
  S_k \sim \frac{(3-2\sqrt{2})^{-k-\frac 12}}{2\sqrt{\pi k^3}} \,.
\end{equation}
We will proceed to show that
\begin{equation}
  \lVert \F_k \rVert_{R - \delta k}
  \leq \frac{M^{k+1}}{\delta^k} \, S_k \,.
  \label{e.Fk_recursive}
\end{equation} 
The claimed estimate \eqref{e.Fk_estimate} is then a direct
consequence of \eqref{e.Fk_recursive}, \eqref{asymBehavior},
\eqref{eq:M} and $\delta k \leq R/2$.  Indeed, when $M=\delta$ in
\eqref{eq:M}, then \eqref{e.Fk_estimate} holds with
$\gamma = 3-2\sqrt{2}$.  Otherwise,
$M = \sup_{z \in B_R(z_0)} \lvert \nabla V(z) \rvert$, so that
choosing $\gamma = R \, (3-2\sqrt{2})/(10 \,M)$ will suffice.  The
minimum of both provides an $n$-independent choice of $\gamma$, with
similar considerations for $C$.

To prove \eqref{e.Fk_recursive}, we proceed by recursion on $k$.  For
$k=0$, the statement is trivial.  Now suppose the result is proved up
to index $k$.  Then, by Lemma~\ref{l.estimAnalitic},
\begin{align}
  \lVert \F_k \rVert_{R-\delta (k+1)}
  & \leq \lVert \F_k \rVert_{R-\delta (k+1)}
         + \frac1\delta \sum_{i+j=k} 
           \lVert \F_i \rVert_{R-\delta k} \, 
           \lVert \F_j \lVert_{R - \delta (k+1)}
    \notag \\
  & \leq \lVert \F_k \rVert_{R-\delta k}
         + \frac1\delta \sum_{i+j=k} 
           \lVert \F_i \rVert_{R-\delta i} \, 
           \lVert \F_j \lVert_{R - \delta j}
    \notag \\
  & \leq \frac{M^{k+2}}{\delta^{k+1}} \, 
         \biggl( 
           \frac{\delta}{M} \, S_k 
           + \sum_{i+j =k} S_i \, S_j
         \biggr) 
    \notag \\
  & \leq \frac{M^{k+2}}{\delta^{k+1}} \, S_{k+1} \,,
\end{align}
where the second inequality is due to the nesting of the balls over
which the supremum is taken, the third inequality is due to the
recursion hypothesis, and the last inequality uses $M\geq \delta$.
\end{proof}

\begin{proof}[Proof of Theorem~\ref{mainTheorem2}]
Without loss of generality, we assume that $a=1$.  Recalling the
expression for the remainder in the form \eqref{e.remainder1}, 
noting that 
\begin{equation}
  \lVert \langle \Rho, \F \rangle \rVert_r
  \leq \lvert \Rho \rvert \, \lVert \F \rVert_r \,,
\end{equation}
and referring to Lemma~\ref{EstimRhoDeriv} and
Lemma~\ref{l.Fk_estimate}, we estimate
\begin{align}
  \lVert R_{n+1} \rVert_{R/2}
  & \leq \sum_{k=n+2}^{2n+3} \eps^{k+2} \,
         \lvert \Rho_{k} \rvert \, \lVert \F_{k} \rVert_{R/2}
    \notag \\
  & \leq C \sum_{k=n+2}^{2n+3} \eps^{k+2} \,
         \frac{(k+1)!^2}{\beta^{k+1}} \,
         \biggl( \frac{n}\gamma \biggr)^k
    \notag \\
  & \leq C_1 \, \eps^2 \sum_{k=n+2}^{2n+3} \eps^{k} \,
         \frac{n^{3k}}{\alpha^{k}}
    \notag \\
  & \leq C_1 \, \frac{\delta^{n+3}}{1-\delta} \,.
  \label{e.Rn1_estimate}
\end{align} 
The third step is based on Stirling's inequality in the form
$m! < \e^{m-1} \, m^{m+1/2}$ for every $m \geq 2$, the inequality
$k+1 \leq 2n+3 \leq 5n$, and the observation that factors growing
algebraically in $k$ can always be absorbed by lowering $\alpha$ and
adjusting the overall multiplicative constant.  In the final step in
\eqref{e.Rn1_estimate} we have estimated the sum by the corresponding
infinite geometric series under the assumption that $\delta \equiv
\eps \, n^3/\alpha < 1$ and $\eps \leq 1$.  Let us now choose
\begin{equation}
  n = \biggl\lfloor \biggl( 
        \frac{\alpha \delta}{\eps}
      \biggr)^{\tfrac13} \biggr\rfloor \,.
\end{equation}
Then 
\begin{equation}
  \lVert R_{n+1} \rVert_{R/2}
  \leq \frac{C_1}{1-\delta} \, 
       \delta^{(\alpha \delta/\eps)^{\frac13}}
  \leq C_2 \, \e^{-c \eps^{-\frac13}} \,,
\end{equation}
where, in the last inequality, we have fixed $\delta \in (0,1)$ so
that $c>0$.  Following now the same steps as in the proof of
Theorem~\ref{mainTheorem} and using assumption \eqref{CRho} at $t=0$,
we observe that $w(0) = 0$ so that there exists a constant $C_3(T)$
such that
\begin{equation}
  \sup_{t\leq T} \, \lVert p(t) - \eps \, F_n(q(t),t) \rVert
  \leq C_3(T) \, \e^{-c \eps^{-\frac13}} \,.
\end{equation}
Using assumption \eqref{CRho} now at $t=T$, we verify that
\begin{equation}
  G_n (q^\ast) = \eps \, F_n(q(T),T)
\end{equation}
and the required estimate follows. 
\end{proof}

\begin{remark}
While it is possible to find ramp functions in $G^s$ for any $s>1$
satisfying the exponential order condition \eqref{CRho}, the proof as
stated will only generalize to Gevrey classes $s> 2$, as the required
generalization of Lemma~\ref{l.1},
\begin{gather}
  \sum_{m=0}^n (m+\ell)!^{s-1} \, (n+k-m-\ell)!^{s-1}
  \leq (n+k)!^{s-1} 
\end{gather}
fails for $s<2$.  For $s>2$, the final estimate reads
\begin{gather}
  \lVert p(T) - G_n(q^\ast) \rVert 
  \leq d \, \e^{-c \eps^{-\frac{1}{s+1}}} \,.
\end{gather}
As this is weaker than \eqref{MainEstimExp}, this generalization is of
little interest, in particular since a suitable ramp function in $G^2$
is easily available; see \eqref{e.rhoexp} below.
\end{remark}

As in Section~\ref{s.averaging}, we can combine our result with the
long-time invariance of approximate balance provided by
\cite[Theorem~2.1]{CotterReich06} as follows.

\begin{corollary}
In the setting of Theorem~\ref{mainTheorem2}, suppose $V$ is analytic
and strictly convex.  Then the method of optimal balance generates a
state which remains balanced to $O(\exp(-c/\eps^{1/3}))$ over times of
$O(\exp(c/\eps))$ under the dynamics of system \eqref{e.hamiltonian}.
\end{corollary}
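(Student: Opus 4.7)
The plan is to mimic the argument following Theorem~\ref{mainTheorem}, replacing the algebraic bound by the exponential bound from Theorem~\ref{mainTheorem2}. First I would verify that strict convexity together with analyticity of $V$ forces trajectories of \eqref{e.hamiltonian} starting from bounded initial data to remain in a compact subset $\mathcal{K}$ of phase space for all times, uniformly in $\eps$; this is standard and uses conservation of the Hamiltonian of \eqref{e.hamiltonian} along with coercivity of the potential. Hence there exist $R>0$ and $z_0$ with $\mathcal{K}\subset B_{R/2}(z_0)$ and $V$ analytic on $B_R(z_0)$, so that the hypotheses of Theorem~\ref{mainTheorem2} are met. With the choice $n = \lfloor (\alpha\delta/\eps)^{1/3}\rfloor$ made in its proof, optimal balance then produces a state $(q^\ast,p^\ast)$ with $\lVert p^\ast - G_n(q^\ast) \rVert \leq d\,\e^{-c\eps^{-1/3}}$.

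Next I would identify our fast coordinate $p - G_n$ with the normal-form fast variable $p_\eps$ of \cite{CotterReich06}. Both are constructed order by order from the same approximately invariant slow manifold, so by uniqueness of the asymptotic expansion they coincide up to an $O(\eps^{n+2})$ remainder. For $n$ of order $\eps^{-1/3}$ this remainder is bounded by $\e^{-\eps^{-1/3}\lvert\log \eps\rvert}$, which is absorbed into the exponential bound at the cost of slightly lowering the constant $c$. Consequently, when $(q^\ast,p^\ast)$ is used as initial data for \eqref{e.hamiltonian}, the associated normal-form fast variable has size $O(\e^{-c'\eps^{-1/3}})$ at the initial instant, for some $c'>0$.

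Finally, I would invoke \cite[Theorem~2.1]{CotterReich06} to propagate this smallness: the normal-form drift on the slow manifold is itself $O(\e^{-c/\eps})$, so an initial fast imbalance of size $\eta$ remains of size $O(\eta + \e^{-c/\eps})$ over times of $O(\e^{c/\eps})$. Taking $\eta = \e^{-c'\eps^{-1/3}}$ and noting that the first term dominates as $\eps \to 0$ yields the claimed $O(\e^{-c/\eps^{1/3}})$ bound. The only point requiring care is tracking the algebraic remainder $\eps^{n+2}$ once the truncation order $n$ is allowed to grow with $\eps$, but this is already implicit in the geometric-series step \eqref{e.Rn1_estimate} from the proof of Theorem~\ref{mainTheorem2} and transfers without difficulty; everything else is bookkeeping on constants.
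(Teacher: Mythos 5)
Your proposal is correct and follows essentially the same route the paper takes: establish confinement of trajectories via convexity and energy conservation, identify the fast coordinate $p - G_n$ of the optimal balance construction with the Hamiltonian normal-form fast variable $p_\eps$ of \cite{CotterReich06} up to the truncation remainder, and then invoke \cite[Theorem~2.1]{CotterReich06} to propagate the initial smallness over exponentially long times. The one extra point you raise --- that the identification remainder $\eps^{n+2}$ must still be controlled once $n$ is tied to $\eps$ via $n \sim \eps^{-1/3}$ --- is a reasonable thing to check, and you handle it correctly: $\eps^{n+2} \approx \e^{-\eps^{-1/3}\lvert\log\eps\rvert}$ is in fact subdominant to $\e^{-c\eps^{-1/3}}$ for any fixed $c$, so it is absorbed without even having to lower the constant. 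The paper leaves this implicit, as it merely says the exponential corollary follows ``as in'' the algebraic case, so your version is a slightly more explicit rendering of the same argument.
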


\section{Numerical Tests}
\label{s.numerics}

A direct numerical demonstration of Theorems~\ref{mainTheorem}
and~\ref{mainTheorem2} is impossible as we do not have direct access
to the reference slow vector field $G_n$.  We
thus resort to computing the following proxy for the balance error. 
\begin{enumerate}
\item Given $q^*$, compute the corresponding $p^*$ via optimal
balance.  
\item Evolve the full system \eqref{e.hamiltonian}, initialized with
$q(0)=q^*$ and $p(0)=p^*$, forward up some time $t_1$ which is fixed
independent of $\eps$ on the slow time scale.  (For the simulations
shown below, $\eps t_1=0.5$).

\item ``Rebalance'' the evolved state, i.e., find a $p_1^*$ via
optimal balance for the given $q_1^*=q(t_1)$.

\item Define the diagnosed imbalance as
$I=\eps^{-1} \, \lVert p(t_1)-p_1^* \rVert$.
\end{enumerate}

We note that the diagnosed imbalance $I$ is not a direct measure of
the imbalance error $\lVert p(T) - G_n(q^\ast) \rVert$.  On the one
hand, $I$ may be overestimating imbalance because during the forward
simulation of model \eqref{e.hamiltonian}, there is a slow drift off
the slow manifold.  However, since $t_1$ is taken to be small, this
contribution is small as well as asymptotically subdominant.  A more
serious question is whether $I$ may underestimate the imbalance
because re-balancing may simply reproduce the same bias committed
during the initial balancing.  Since imbalanced motion is oscillatory
on the fast time scale, we would expect that the diagnosed imbalance
oscillates on the fast time scale as a function of $t_1$, so that the
amplitude of this oscillation can be taken as a measure of imbalance.
However, we did a careful pre-study which showed that $I$ depends
almost monotonically on $t_1$.  Thus, simply looking at the imbalance
for fixed $t_1$ already gives robust results.  Moreover, as we shall
see, the diagnosed imbalance reproduces the predictions of
Theorem~\ref{mainTheorem} accurately.  This gives strong empirical
support to the idea that $I$ is a useful proxy for imbalance which
could also be used in more complex situations, e.g., for the study of
rotating fluids.

\begin{figure}[tb]
\centering
\includegraphics{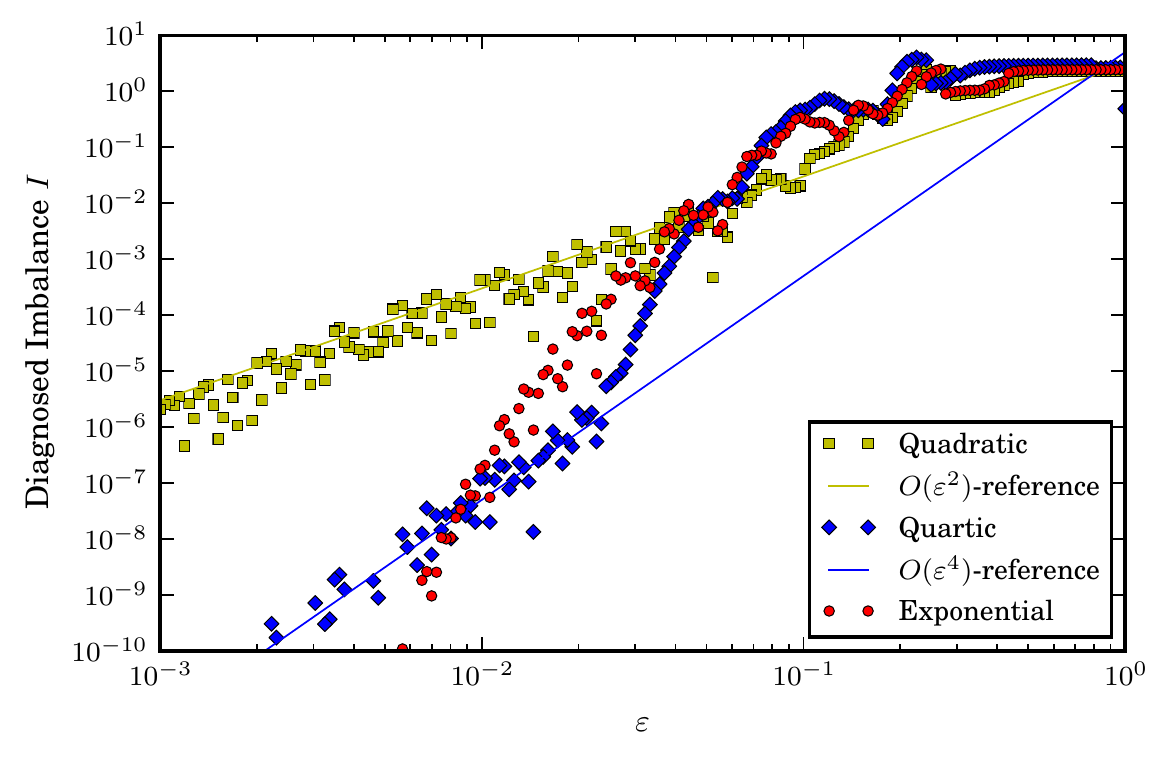}
\caption{Diagnosed imbalance $I$ as a function of $\eps$ for different
ramp functions.  ``Quadratic,''  ``quartic,'' and ``exponential''
refer to the  ramp functions \eqref{e.rhopoly} with $f(\theta)=x^2$,
$f(\theta)=x^4$, and $f(\theta)=\exp(-1/\theta)$, respectively.  The
ramp time is $T=2/\eps$.}
\label{f.nonlinear-order}
\end{figure}

\begin{figure}[tb]
\centering
\includegraphics{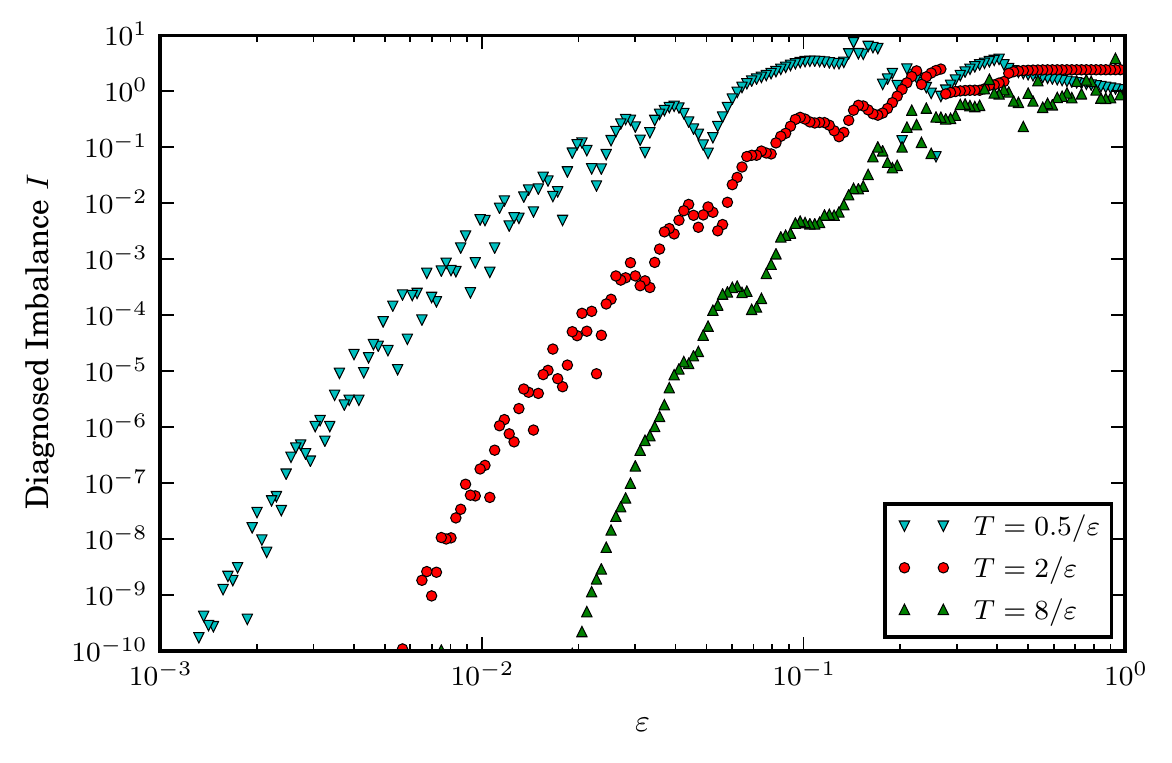}
\caption{Diagnosed imbalance $I$ as a function of $\eps$ for the when
taking the exponential ramp function \eqref{e.rhopoly} with
$f(\theta)=\exp(-1/\theta)$ for three different ramp times.}
\label{f.nonlinear-rampingtime}
\end{figure}

\begin{figure}[tb]
\centering
\includegraphics{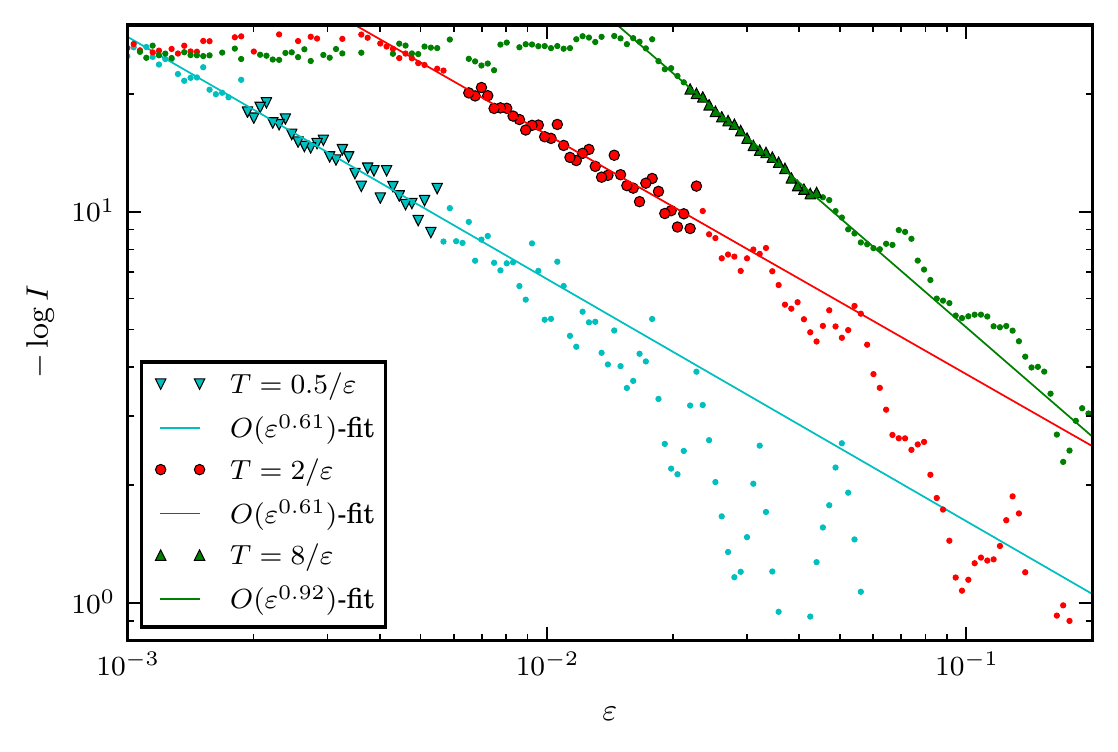}
\caption{The same data as Figure~\ref{f.nonlinear-rampingtime}, shown
on a doubly logarithmic vertical axis.  This allows a least square fit
to determine the power of $\eps$ in the exponent of the expression for
the exponential convergence rate, see equation \eqref{e.exprate}.  The
linear least square fit was performed over a finite interval in
$\eps$, indicated by the larger dots.}
\label{f.doublelog}
\end{figure}

In our proof-of-concept implementation, we use the potential
\begin{equation}
  V(q) = \tfrac34 \, q_1^4 + \tfrac14 \, q_2^4 \,.
\end{equation}
and solve the boundary value problem \eqref{e.ramp} by simple shooting
with an off-the-shelf ODE solver and root finder.  More efficient
implementations would use multiple shooting and possibly a symplectic
time-discretization.  The ramp functions are of the form
\begin{equation}
\label{e.rhopoly}
  \rho(\theta)
  = \frac{f(\theta)}{f(\theta) + f(1-\theta)}\, ,
\end{equation}
where
\begin{equation}
\label{e.rhoalg}
  f(\theta) = \theta^k
\end{equation}
for different exponents $k$, or $f(\theta) = \exp(-1/\theta)$ so that
\begin{equation}
  \label{e.rhoexp}
  \rho(\theta) 
  = \frac{e^{-1/\theta}}{e^{-1/\theta} + e^{-1/(1-\theta)}} \,.
\end{equation}
The ramp function \eqref{e.rhoexp} satisfies the exponential order
condition and is of Gevrey class $2$, thus it satisfies the
assumptions of Theorem~\ref{mainTheorem2}.\footnote{Indeed, each of
the terms appearing in \eqref{e.rhoexp} are of class $G^2$, see
Lemma~\ref{l.gevrey-function}.  As Gevrey classes are vector spaces,
the denominator of \eqref{e.rhoexp} is also of class $G^2$.  Finally,
nonsingular quotients of $G^2$-functions are again of class $G^2$, see
Lemma~\ref{l.gevrey-quotients}.}

In Figure~\ref{f.nonlinear-order}, we compare the performance of ramp
functions satisfying different order conditions.  For two algebraic
ramp functions with $k=2$ and $k=4$ in \eqref{e.rhoalg} corresponding
to $n=1$ and $n=3$ in the algebraic order condition of
Theorem~\ref{mainTheorem}, the predicted respective quadratic and
quartic decay of imbalance is clearly visible.  The super-algebraic
decay of imbalance for the ramp function with exponential order
condition is seen as a convex-shaped curve in the log-log plot of $I$
vs.\ $\eps$.

In Figure~\ref{f.nonlinear-rampingtime}, we explore the dependence of
the diagnosed imbalance $I$ on the ramp time $T$ for the exponential
ramp function case.  For a given value of $\eps$, longer ramp times
yield smaller diagnosed imbalances.  A rigorous study goes beyond the
Theorems proved here.

Figure~\ref{f.doublelog} shows the same data as
Figure~\ref{f.nonlinear-rampingtime}, but with a doubly logarithmic
vertical axis.  Assuming that the diagnosed imbalance behaves in the
general form suggested by Theorem~\ref{mainTheorem2}, i.e., if
\begin{equation}
  I = d \, \e^{-c \eps^{-\alpha}} \,,
  \label{e.imbalance-scaling}
\end{equation}
then
\begin{equation}
  \ln (\ln {d} - \ln{I}) =  \ln c - \alpha \, \ln \eps \,.
  \label{e.exprate}
\end{equation}
Then, plotting $\ln (-\ln I)$ vs.\ $\ln \eps$ should asymptote to a
line of slope $-\alpha$.  The data points show such behavior for a
good range of small values of $\eps$ small before the accuracy of the
time integrator and root solver, controlled to be at least $10^{-10}$,
breaks down.  The observed behavior is better than $\alpha=1/3$
obtained in the bounds of Theorem~\ref{mainTheorem2}, but depends on
the ramp time.  For large ramp times, the error is dominated by the
derivatives of the potential $V$ and the estimated exponent comes
close to $\alpha=-1$ that would be expected from the usual exponential
asymptotics \cite{CotterReich06}.  For shorter ramp times, the
influence of the ramp function becomes more important and the exponent
decreases, but appears to remain better than the theoretical bounds.

\section{Discussion}
\label{s.summary}

Our results show, in the context of a simple finite dimensional
Hamiltonian model problem, that the method of optimal balance yields a
state which is exponentially close to a balanced state obtained by
optimal truncation of an asymptotic series describing the approximate
slow manifold.  We believe that similar results will apply to more
general Hamiltonian fast-slow systems in the absence of resonances.

The result gives a strong support to the notion that optimal balance
may in fact be the best practically available characterization of a
slow manifold in this context.  As optimal truncation of an asymptotic
series is not computationally feasible, optimal balance could
therefore be used as a computable definition of a balanced state (this
idea has in fact been proposed earlier by McIntyre \cite{McIntyre09}).

However, a number of questions remain open.  An obvious question is
the sharpness of the analysis, both in terms of the current
restriction to ramp functions in Gevrey classes $G^s$ for $s \geq 2$,
and in terms of the exponent $\alpha$ in the imbalance scaling
\eqref{e.imbalance-scaling}.  A more practical concern is the best
choice of ramp time $T$ for fixed $\eps$.  Our analysis concerns only
the scaling with respect to $\eps$, but the structure of the estimates
as well as the numerical results suggest that at least initially the
results improve when the ramp time is increased.  This, however,
cannot go on forever because beyond some ramp time $T_{\text{opt}}$,
the imbalance due to the drift off the approximate manifold will
dominate and imbalance will increase as $T$ is increased further.  How
to design an adaptive algorithm which chooses an optimal ramp time
automatically is entirely open.

Whereas optimal balance has been successfully used in geophysical
fluid equations, the theory presented here was only developed for
finite-dimensional Hamiltonian systems. It is therefore a natural
question how our results translate to infinite-dimensional Hamiltonian
systems. A direct generalization of the model \eqref{e.hamiltonian} is
the semilinear Klein--Gordon equation in the non-relativistic limit
(e.g. \cite{Tsu_NRLKG}).  In general, obtaining results on approximate
slow manifolds for infinite dimensional Hamiltonian systems is
difficult since unbounded operators may destroy the scale separation
and the associated emergence of slow-fast or fast-fast resonances.
Existing results either apply to special solutions (e.g. \cite{Lu14}),
bounded slow subsystems (e.g. \cite{KristiansenWulff16}), or require
spatial analyticity of solutions (e.g. \cite{MatthiesScheel03}).
Finding the right analytical setting for the semi-linear Klein--Gordon
equation is a subject of ongoing research.

The question of justification of optimal balance in geophysical flow
problems is even more difficult, although our main motivation and
reported successful implementations come from this area.  Short of
rigorous justification, the issue of efficient implementation, in
terms of run-time and in terms of coding effort, is of considerable
practical relevance.  For the toy model considered here, we were able
to solve the optimal balance system problem \eqref{e.ramp} by simple
shooting.  However, this might fail or become excessively expensive in
higher dimensions.

Sophisticated boundary value solvers may be needed but are hard to
implement and computationally costly.  We remark that we have only
provided an approximate iterative solution of the
boundary-value-problem \eqref{e.ramp} and the issue of well-posedness
of the original boundary-value problem was not addressed.  Vi{\'u}dez
and Dritschel \cite{ViudezDritschel04} suggest an iterative procedure
where one integrates back and forth, resetting to the correct boundary
condition at each end. Empirically, their approach converges well in
the geophysical fluid dynamics context of their study. The iterative
back-and forth integrations can be understood as nudging towards the
boundary-values.  For linear systems, back-and-forth nudging can be
rigorously proven to converge to the true solution
\cite{AurouxB:2005:BackFN}; the problem considered here is, in our
understanding, not directly covered by these results but we expect
that a proof could be obtained with reasonable effort.  In our
concrete simulations, shooting was slightly more efficient than
back-and-forth nudging and converged for a moderately larger set of
parameters.  Finding the best strategy is an open problem.

\appendix

\section{Combinatorial estimates}

In the following, we prove an estimate on the combinatorial constants
which appear in the proof of Theorem~\ref{mainTheorem2}.  This result
is stated as Lemma~\ref{l.combinatorial2} below.  We begin with a
special case which is needed in the proof of the general result.
 
\begin{lemma} \label{l.1}
Let $n \in \N$ and $k, \ell \in \N^*$ with $1 \leq \ell < k$.  Then
\begin{gather}
  \sum_{m=0}^n (m+\ell)! \, (n+k-m-\ell)!
  \leq (n+k)! \,.
\end{gather}
\end{lemma}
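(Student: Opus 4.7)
The plan is to reduce the inequality to a statement about reciprocals of binomial coefficients, which can then be bounded uniformly.

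First I would divide both sides by $(n+k)!$ and rewrite each summand as $1/\binom{n+k}{m+\ell}$. Substituting $j = m+\ell$, the desired inequality becomes
\begin{equation*}
  \sum_{j=\ell}^{n+\ell} \binom{n+k}{j}^{-1} \leq 1 \,.
\end{equation*}
The hypotheses $\ell \geq 1$ and $\ell < k$ (i.e.\ $\ell \leq k-1$) guarantee that the index range satisfies $1 \leq j \leq n+\ell \leq n+k-1$, so the index $j$ avoids both endpoints $0$ and $n+k$ of the row of Pascal's triangle.

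The key observation is then that for every $j \in \{1, \dots, n+k-1\}$, we have $\binom{n+k}{j} \geq \binom{n+k}{1} = n+k$, since the binomial coefficients along a fixed row are unimodal with minimum on the interior reached at the endpoints $j=1$ and $j=n+k-1$. Consequently each summand satisfies $\binom{n+k}{j}^{-1} \leq 1/(n+k)$, giving
\begin{equation*}
  \sum_{j=\ell}^{n+\ell} \binom{n+k}{j}^{-1}
  \leq \frac{n+1}{n+k} \,.
\end{equation*}
Since $k \geq \ell+1 \geq 2$, we have $n+k \geq n+1$, so the right-hand side is at most $1$, which concludes the argument.

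There is no real obstacle here; the proof is essentially a one-line combinatorial bound once the inequality is cast in terms of reciprocal binomial coefficients. The only subtlety is making sure the index range $[\ell, n+\ell]$ stays strictly inside $[1, n+k-1]$, which is exactly where both hypotheses $\ell \geq 1$ and $\ell < k$ are used.
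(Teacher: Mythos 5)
Your proof is correct, and it takes a genuinely different route from the paper's. The paper proceeds by induction on $n$: it peels off the $m=n$ term, bounds it by $(n+k-1)!$ using $1 \leq \ell < k$, and then applies the induction hypothesis to the remaining sum after pulling out a factor $n+k-\ell-m \leq n+k-1$ from each summand. You instead normalize by $(n+k)!$ to reformulate the claim as $\sum_{j=\ell}^{n+\ell} \binom{n+k}{j}^{-1} \leq 1$ and then invoke the fact that interior entries of a row of Pascal's triangle are bounded below by $\binom{n+k}{1} = n+k$. Your argument is more direct and makes the combinatorial content (why both hypotheses $\ell \geq 1$ and $\ell < k$ are needed: to keep $j$ away from the endpoints of the row) fully transparent, whereas the paper's induction needs the same observation only implicitly in bounding the boundary term $(\ell+n)!\,(k-\ell)!$. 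The paper's inductive structure has no particular advantage here; if anything, your non-inductive bound also gives the slightly sharper statement $\sum_{m=0}^n (m+\ell)!\,(n+k-m-\ell)! \leq \tfrac{n+1}{n+k}\,(n+k)!$, which could be useful elsewhere.
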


\begin{proof}
We proceed by induction on $n$.  For $n=0$, the statement is obvious.
Now suppose the statement is true up to step $n-1$.  Then
\begin{align}
  \sum_{m=0}^n (\ell+m)! \, & (n+k-\ell-m)!
    = (\ell+n)! \, (k-\ell)!
      + \sum_{m=0}^{n-1} (\ell+m)! \, (n+k-\ell-m)!
      \notag \\
  & \leq (n+k-1)! 
      + (n+k-1) \sum_{m=0}^{n-1} (\ell+m)! \, (n-1+k-\ell-m)!
      \notag \\
  & \leq (n+k-1)! 
      + (n+k-1) \, (n-1+k)!
    = (n+k)! 
\end{align}
where the first inequality is due to $1 \leq \ell < k$ and the second
inequality uses the induction hypothesis.
\end{proof}

\begin{lemma} \label{l.combinatorial2}
Let $n \in \N$, and $s \in \N^*$.  Then for a multi-index of strictly
positive integers $\alpha \in (\N^*)^s$ with $\lvert \alpha \rvert = k$, 
\begin{gather}
  \sum_{\lvert \beta \rvert = n} 
    \frac{(\alpha + \beta)!^2}{\beta!}
  \leq \frac{(n+k)!^2}{n!} \,,
  \label{e.l2}
\end{gather}
where the sum is over multi-indices $\beta$ of length $s$.
\end{lemma}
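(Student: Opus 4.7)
The plan is to induct on the length $s$ of the multi-index. For $s=1$ the inequality reduces to equality: $\alpha=(k)$ and $\beta=(n)$ are forced, so both sides equal $(n+k)!^2/n!$. For $s\geq 2$, I split off the first coordinate: write $\alpha=(\alpha_1,\alpha'')$ with $\alpha''\in(\N^*)^{s-1}$ and $\lvert\alpha''\rvert=k-\alpha_1$. The positivity constraint on the remaining entries forces $1\leq\alpha_1\leq k-1$, placing $\alpha_1$ in precisely the range where Lemma~\ref{l.1} is valid. Writing $\beta=(\beta_1,\beta'')$ and evaluating the inner sum over $\lvert\beta''\rvert=n-\beta_1$ using the induction hypothesis, the problem reduces, after setting $\ell=\alpha_1$ and $m=\beta_1$ and multiplying through by $n!$, to the one-variable inequality
\begin{equation*}
  \sum_{m=0}^n \binom{n}{m}\,(\ell+m)!^2\,(n+k-\ell-m)!^2 \leq (n+k)!^2 \qquad \text{for}\; 1\leq\ell<k.
\end{equation*}

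For this one-variable inequality I plan to decompose each summand asymmetrically as
\begin{equation*}
  \bigl[\binom{n}{m}(\ell+m)!(n+k-\ell-m)!\bigr]\cdot\bigl[(\ell+m)!(n+k-\ell-m)!\bigr]
\end{equation*}
and control the two brackets separately. For the first bracket, keeping only the term $j=m$ of the Vandermonde identity $\binom{n+k}{\ell+m}=\sum_j\binom{n}{j}\binom{k}{\ell+m-j}$ gives $\binom{n+k}{\ell+m}\geq\binom{n}{m}\binom{k}{\ell}\geq\binom{n}{m}$, which rearranges to the pointwise estimate $\binom{n}{m}(\ell+m)!(n+k-\ell-m)!\leq(n+k)!$. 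For the second bracket, Lemma~\ref{l.1} with the available range $1\leq\ell<k$ gives $\sum_{m=0}^n(\ell+m)!(n+k-\ell-m)!\leq(n+k)!$. Factoring the uniform pointwise bound out of the sum and then applying Lemma~\ref{l.1} to what remains yields $(n+k)!\cdot(n+k)!=(n+k)!^2$, closing the induction.

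The main obstacle is the factor $\binom{n}{m}$ sitting inside a squared expression: a termwise squaring of Lemma~\ref{l.1} or a plain Cauchy--Schwarz argument cannot absorb it without losing a factor. The resolution is the asymmetric factoring above, which assigns the binomial coefficient to a pointwise Vandermonde bound and leaves exactly the factorial structure of Lemma~\ref{l.1} to be summed. The hypothesis $1\leq\ell<k$ in Lemma~\ref{l.1} then matches precisely the constraint produced by splitting off a single coordinate of the positive multi-index $\alpha$, which is presumably why Lemma~\ref{l.1} is positioned in the paper as the technical warm-up for this result.
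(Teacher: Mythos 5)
Your proposal is correct and follows essentially the same route as the paper: induction on $s$ by splitting off one coordinate, reduction via the induction hypothesis to the one-variable sum, and then the observation that the binomial factor can be absorbed because $\binom{n}{m} \le \binom{n+k}{m+\ell}$, leaving exactly the sum controlled by Lemma~\ref{l.1}. The only difference is presentational: where the paper simply asserts that the ratio $\binom{n}{m}\binom{n+k}{m+\ell}^{-1}$ is bounded by $1$, you supply the short Vandermonde justification for that inequality, which is a welcome bit of extra rigor but not a different method.
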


\begin{proof}
We proceed by induction on $s$.  For $s=1$, the two sides of
\eqref{e.l2} are trivially equal.  Now suppose the statement holds
true up to step $s-1$.  We write $\alpha = (\alpha', \ell)$ and
$\beta = (\beta', m)$, where $\alpha'$ and $\beta'$ are multi-indices
of length $s-1$, and $\ell$ and $k$ are integers satisfying
$1 \leq \ell < k$.  Then
\begin{align}
  \sum_{\lvert \beta \rvert = n} 
    \frac{(\alpha + \beta)!^2}{\beta!}
  & = \sum_{m=0}^n \frac{(\ell+m)!^2}{m!}
        \sum_{\lvert \beta' \rvert = n-m} 
        \frac{(\alpha' + \beta')!^2}{\beta'!}
    \notag \\
  & \leq \sum_{m=0}^n \frac{(\ell+m)!^2}{m!} \,
        \frac{(n-m+k-\ell)!^2}{(n-m)!} 
    \notag \\
  & = \frac{(n+k)!}{n!} \sum_{m=0}^n
        \binom{n}{m} \, \binom{n+k}{m+\ell}^{-1} \,
        (m+\ell)! \, (n+k-m-\ell)! \,.
\end{align}
As the ratio of the binomial coefficients that appear in the right
hand sum is always bounded above by $1$, the proof is achieved by
Lemma~\ref{l.1}.
\end{proof}

\section{$G^2$-estimates on the exponential ramp function}

The following two results are necessary to show that the exponential
ramp function \eqref{e.rhoexp} used above in the numerical experiments
is of Gevrey class $2$.  We believe that the results are classical;
Lemma~\ref{l.gevrey-function}, for example, is stated without proof in
\cite[p.\ 218]{ito1993encyclopedic}.  In this appendix, we give
complete proofs for the convenience of the reader.

\begin{lemma} \label{l.gevrey-function}
The function
\begin{gather}
  f(x) =
  \begin{cases}
    0 & \text{for } x \leq 0 \\
    \exp(-1/x) & \text{for } x>0 
  \end{cases}
\end{gather}
is of Gevrey class $2$ uniformly in $\R$.
\end{lemma}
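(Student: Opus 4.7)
The plan is to reduce the uniform estimate to a complex-analytic Cauchy argument on $(0, \infty)$, where $f$ coincides with the restriction to the positive real axis of the function $z \mapsto \e^{-1/z}$, which is holomorphic on the right half-plane $\{ z \in \mathbb{C} : \operatorname{Re} z > 0 \}$. For $x \leq 0$ the Gevrey estimate is trivial because $f^{(n)} \equiv 0$ there; combined with the classical fact that $f \in C^\infty(\R)$ with $f^{(n)}(0) = 0$ for every $n$, this reduces the problem to bounding $f^{(n)}$ uniformly for $x > 0$.

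For $x > 0$, I would apply the standard Cauchy estimate on the disk $D_x = \{ z \in \mathbb{C} : |z - x| \leq \alpha x \}$ with $\alpha \in (0,1)$ fixed; this disk lies strictly in the right half-plane, so $\e^{-1/z}$ is holomorphic on a neighborhood of $\overline{D_x}$. Cauchy's estimate then yields
\begin{equation*}
  |f^{(n)}(x)| \leq \frac{n!}{(\alpha x)^n} \, \sup_{|z-x|=\alpha x} |\e^{-1/z}| \,.
\end{equation*}
Since $|\e^{-1/z}| = \e^{-\operatorname{Re}(1/z)}$, the remaining task is to bound $\operatorname{Re}(1/z)$ from below on the boundary circle. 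Writing $z = x(1 + \alpha \e^{\mathrm{i}\theta})$, a one-line calculus check in $\cos\theta$ shows that $\operatorname{Re}(1/z)$ is minimized at $\theta = 0$ with value $1/((1+\alpha)x)$, giving $\sup_{|z-x|=\alpha x} |\e^{-1/z}| \leq \e^{-\kappa/x}$ where $\kappa = 1/(1+\alpha)$.

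The last step is a one-variable optimization in $x > 0$: the function $x \mapsto x^{-n} \e^{-\kappa/x}$ attains its maximum at $x = \kappa/n$ with value $(n/\kappa)^n \e^{-n}$, and Stirling's inequality $n^n \e^{-n} \leq n!/\sqrt{2\pi n}$ turns this into $n!/(\kappa^n \sqrt{2\pi n})$. Combining the two estimates yields
\begin{equation*}
  \sup_{x \in \R} |f^{(n)}(x)|
  \leq \frac{n!^2}{(\alpha \kappa)^n \sqrt{2\pi n}}
\end{equation*}
for $n \geq 1$; the case $n = 0$ is trivial since $|f| \leq 1$. Taking for instance $\alpha = \tfrac12$ gives $\kappa = \tfrac{2}{3}$ and $\alpha\kappa = \tfrac{1}{3}$, so the Gevrey-$2$ bound \eqref{e.gevrey} holds with $C = 1$ and $\beta = \tfrac{1}{3}$.

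There is no substantive obstacle: the whole argument is a textbook Cauchy estimate tailored to the essential singularity of $\e^{-1/z}$ at $0$, combined with an elementary extremum. The minor checks that $f$ is smooth at $x = 0$ with vanishing derivatives there, and that the minimum of $\operatorname{Re}(1/z)$ on the circle is attained at $\theta = 0$, are both classical. The value of $\alpha \in (0,1)$ is inessential and can be optimized to improve the explicit constant $\beta$.
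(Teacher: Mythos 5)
Your proof is correct and takes essentially the same route as the paper: both apply the Cauchy estimate for the $n$th derivative of $\e^{-1/z}$ on a circle of radius proportional to $x$ centered at $x$, control $\operatorname{Re}(1/z)$ on that circle, and then optimize the resulting bound over $x>0$ via Stirling. The only (cosmetic) difference is that you locate the minimum of $\operatorname{Re}(1/z)$ exactly at $\theta=0$, giving $\kappa = 1/(1+\alpha)$, where the paper uses the slightly cruder lower bound $(1-\lambda)/(1+\lambda)^2$; this just improves the explicit constant $\beta$ and does not change the argument.
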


\begin{proof}
The function $f$ is holomorphic in the right complex half-plane.
Fixing $\lambda \in (0,\tfrac12)$, the Cauchy integral formula 
\begin{equation}
  f^{(n)}(x) 
  = \frac{n!}{2\pi i}
    \int_{\gamma}\frac{f(z)}{(z-x)^{n+1}} \, \d z
\end{equation}
applies in particular when $\gamma$ is a circle of radius $\lambda x$
centered at $x$.  We parameterize $\gamma$ writing
$z(\theta) = x + \lambda x \, w(\theta)$ where $w(\theta)$ is an
arc-length parameterization of the unit circle.  Then
\begin{align}
  \lvert f^{(n)}(x) \rvert
  & \leq \frac{n!}{2\pi \, (\lambda x)^{n+1}}
        \int_0^{2\pi} \lvert f(z(\theta)) \rvert \, \d \theta
    \notag \\
  & \leq \frac{n!}{(\lambda x)^{n+1}}
       \sup_{\theta \in [0, 2\pi]}
       \biggl|
         \exp \biggl(
                - \frac{1+\lambda \, \overline w(\theta)}%
                       {x \, \lvert 1 + \lambda \, w(\theta) \rvert^2}
              \biggr)
       \biggr|
    \notag \\
  & 
  \leq \frac{n!}{(\lambda x)^{n+1}}
       \exp \biggl(
              - \frac{1-\lambda}%
                     {x \, \lvert 1 + \lambda \rvert^2}
            \biggr) \,.       
\end{align}
Maximizing the right hand side with respect to $x$ and using
Sterling's inequality in the form $m^m \, \e^{-m} \leq m!$, we obtain
\begin{equation}
  \sup_{x \in \R} \lvert f^{(n)}(x) \rvert
  \leq \frac{(n+1)!^{2}}{\eta^{n+1}}
  \label{e.gevrey2estimate}
\end{equation}
with $\eta = \lambda(1 - \lambda)/(1+\lambda)^2$.  This proves that
$f$ is of Gevrey class $2$.
\end{proof}

\begin{lemma} \label{l.gevrey-quotients}
Let $U \subset \R$ be open and suppose $f,g \in G^2(U)$ with
$g \geq c > 0$ for some constant $c$.  Then $h=f/g \in G^2(U)$.
\end{lemma}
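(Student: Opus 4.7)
The plan is to establish the Gevrey-$2$ estimate $\lvert h^{(n)}(x) \rvert \leq A \, n!^2 / \beta^n$ by induction on $n$, with $A$ and $\beta>0$ chosen appropriately in terms of the Gevrey data $(C_f, \beta_f)$, $(C_g, \beta_g)$ of $f,g$ and the lower bound $c$ for $g$. The starting point is the identity $h g = f$; differentiating $n$ times via the Leibniz rule and solving for $h^{(n)}$ yields
\begin{equation}
  h^{(n)} = \frac{1}{g} \biggl[ f^{(n)} - \sum_{k=0}^{n-1} \binom{n}{k} \, h^{(k)} \, g^{(n-k)} \biggr] \,,
\end{equation}
and hence, using $g \geq c > 0$, the inductive bound
\begin{equation} \label{e.propind}
  \lvert h^{(n)} \rvert \leq \frac{1}{c} \biggl[ \lvert f^{(n)} \rvert + \sum_{k=0}^{n-1} \binom{n}{k} \, \lvert h^{(k)} \rvert \, \lvert g^{(n-k)} \rvert \biggr] \,.
\end{equation}

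The next step is to choose $\beta = \lambda \min(\beta_f, \beta_g)$ for a parameter $\lambda \in (0,1)$ to be fixed later. Then $\lvert f^{(n)} \rvert \leq C_f \, n!^2 / \beta^n$, and crucially $\lvert g^{(n-k)} \rvert \leq C_g \, \lambda^{n-k} \, (n-k)!^2 / \beta^{n-k}$ because $\beta/\beta_g \leq \lambda$. Inserting the induction hypothesis into \eqref{e.propind} and reindexing $j = n-k$ turns the Leibniz sum into
\begin{equation}
  \frac{A C_g \, n!}{\beta^n} \sum_{j=1}^{n} j! \, (n-j)! \, \lambda^{j} \leq \frac{A C_g \, n!^2}{\beta^n} \cdot \frac{\lambda}{1-\lambda} \,,
\end{equation}
where the estimate uses the elementary inequality $j!(n-j)!/n! = 1/\binom{n}{j} \leq 1$ together with summation of the geometric series.

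Combining these bounds in \eqref{e.propind} gives
\begin{equation}
  \lvert h^{(n)} \rvert \leq \frac{n!^2}{c \, \beta^n} \biggl( C_f + \frac{A C_g \, \lambda}{1-\lambda} \biggr) \,,
\end{equation}
which is at most $A \, n!^2 / \beta^n$ provided $\lambda$ is small enough that $C_g \lambda / (1-\lambda) < c$ and $A \geq C_f / (c - C_g\lambda/(1-\lambda))$; for instance, $\lambda = c/(2C_g+c)$ and $A = 2 C_f/c$, which also covers the base case $\lvert h \rvert \leq C_f/c$. The main obstacle is that the constant $A$ to be established appears on both sides of the inductive estimate, a manifestation of the nonlinearity of inversion; shrinking $\beta$ by the factor $\lambda$ is precisely what produces the geometric weight $\lambda^j$ needed to absorb $A$ back into itself and close the iteration. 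This mechanism explains why the Gevrey-$2$ class is preserved under nonsingular quotients.
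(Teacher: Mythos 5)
Your proof is correct and follows essentially the same strategy as the paper: apply the Leibniz rule to $hg=f$, solve for $h^{(n)}$, and close an induction by shrinking the Gevrey parameter by a fixed factor so that the binomial sum is absorbable. The only cosmetic differences are that you keep the parameter $\lambda$ free and bound the sum by a geometric series, whereas the paper fixes the factor at $1/3$, normalizes $f\leq 1$, $g\geq 1$, and bounds $\sum_{j=0}^{n-1}(n-j)!\,j!\leq 2\,n!$ directly.
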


\begin{proof}
Without loss of generality, assume that $f\leq 1$ and $g \geq 1$ on
$U$, so that $h \leq 1$.  Further, let $\alpha$ denote the smaller of
the two parameters appearing in the denominator of the Gevrey class
estimates \eqref{e.gevrey} of $f$ and $g$.  Set $\beta = \alpha/3$.
Using the Leibniz rule for the $n$th derivative of the product $gh$
and rearranging terms, we have
\begin{equation}
  h^{(n)} 
  = \frac1g \, \biggl(
                 f^{(n)} - \sum_{j=0}^{n-1} \binom{n}{j} \, 
                 g^{(n-j)} \, h^{(j)}
               \biggr) \,.
\end{equation}
We now proceed by induction on $n$.  For $n=0$, the statement is
obvious.  Now suppose that $h$ satisfies a Gevrey class estimate of
the form \eqref{e.gevrey} with parameter $\beta$ up to order $n-1$.
Then
\begin{align}
  \lvert h^{(n)} (x) \rvert
  & \leq \frac{n!^2}{\alpha^n}
         + n! \sum_{j=0}^{n-1} \frac{(n-j)!}{\alpha^{n-j}} \, 
              \frac{j!}{\beta^{j}}
    \leq \frac{n!^2}{\alpha^n} \, 
         \biggl(
           1 + \frac{3^{n-1}}{n!} 
           \sum_{j=0}^{n-1} (n-j)! \, j!
         \biggr)
    \leq \frac{n!^2}{\beta^n} \,,
\end{align}
where the last inequality is based on the observation that
\begin{gather}
  \sum_{j=0}^{n-1} (n-j)! \, j! 
  = n! + \sum_{j=1}^{n-1} (n-j)! \, j! 
  \leq n! + (n-1) \, (n-1)!
  \leq 2 \, n!
\end{gather}
and further that $1+2 \cdot 3^{n-1} \leq 3^n$.
\end{proof}

\section*{Acknowledgments}

We thank David Dritschel for many useful discussions on balance and
optimal potential vorticity balance.  The numerical study is based on
prior work done by Zekun Yang as part of her Bachelor thesis at Jacobs
University.  This paper contributes to the project ``The interior
energy pathway: internal wave emission by quasi-balanced flows'' of
the Collaborative Research Center TRR 181 ``Energy Transfers in
Atmosphere and Ocean'' funded by the German Research Foundation.
Funding through the TRR 181 is gratefully acknowledged.  HM and MO
further acknowledge funding by German Research Foundation grant
OL-155/6-1.


\end{document}